\documentclass{article}

\usepackage{amsmath}
\usepackage{amsthm}
\usepackage{amssymb}
\usepackage{mathrsfs}
\usepackage{graphicx}
\usepackage{verbatim}
\usepackage{tikz}
\usepackage[all]{xy}
\usepackage{graphics}
\usepackage[colorlinks=true]{hyperref}
\hypersetup{urlcolor=blue, citecolor=red}

\newtheorem{theorem}{Theorem}[section]
\newtheorem{lemma}[theorem]{Lemma}
\newtheorem{corollary}[theorem]{Corollary}
\newtheorem{proposition}[theorem]{Proposition}
\theoremstyle{definition}

\newtheorem*{notation}{Notation}
\newtheorem{remark}{Remark}

\newcommand{\R}{\mathbb{R}}
\newcommand{\Z}{\mathbb{Z}}
\newcommand{\N}{\mathbb{N}}

\def\beq{\begin{equation}}
\def\eeq{\end{equation}}
\def\pa{\partial}
\def\D{\Delta}
\def\f{\varphi}
\def\eps{\varepsilon}

\title{\sc Symmetry and uniqueness of nonnegative solutions of some problems in the halfspace}
\author{Alberto Farina and Nicola Soave}


\begin{document}



\maketitle
{\footnotesize
\centerline{Alberto Farina}
 \centerline{LAMFA, CNRS UMR 7352, Universit\'e de Picardie Jules Verne}
   \centerline{33 rue Saint-Leu, 80039 Amiens, France}
   \centerline{and}
   \centerline{Institut Camille Jordan, CNRS UMR 5208, Universit\'e Claude Bernard Lyon I}
   \centerline{43 boulevard du 11 novembre 1918, 69622 Villeurbane cedex, France}
   \centerline{email: alberto.farina@u-picardie.fr}

\medskip 
\centerline{Nicola Soave}
 \centerline{Universit\`a di Milano Bicocca - Dipartimento di Ma\-t\-ema\-ti\-ca e Applicazioni}
   \centerline{Via Cozzi 53}
   \centerline{20125 Milano, Italy}
\centerline{email: n.soave@campus.unimib.it}   }

\begin{abstract}
\noindent We derive some $1$-D symmetry and uniqueness or non-existence results for nonnegative solutions of
\[
\begin{cases}
-div\left(A(x)\nabla u\right)=u-g(x) & \text{in $\R^N_+$}\\
u=0 & \text{on $\pa \R^N_+$}
\end{cases}
\]
in low dimension, under suitable assumptions on $A$ and $g$. Our method is based upon a combination of Fourier series and Liouville theorems. 
\end{abstract}

\noindent \textbf{Keywords:} $1$-D symmetry for elliptic problems; uniqueness for elliptic problems; Liouville theorems; Fourier series.
\section{Introduction}

This paper concerns symmetry and uniqueness or non-existence of nonnegative solutions to problems of type
\beq\label{pb completo}
\begin{cases}
-div\left(A(x)\nabla u\right)=u-g(x) & \text{in $\R^N_+$}\\
u=0 & \text{on $\pa \R^N_+$}
\end{cases}
\eeq
in low dimension; here $div \left(A(x)\nabla \right)$ is an elliptic operator (not necessarily uniformly elliptic). As far as $g$ is concerned, we will see that the existence and the properties of nonnegative solutions to \eqref{pb completo} depend strongly on it. With this in mind, we will start considering as model problem 
\beq\label{pb modello}
\begin{cases}
-\Delta u = u -1& \text{in $\R^N_+$}\\
u=0 & \text{on $\pa \R^N_+$}.
\end{cases}
\eeq
In section \ref{sezione pb modello}, we will prove the following statement. 
\begin{theorem}\label{teo pb modello}
Let $N=2$ or $3$. If $u \in \mathcal{C}^2(\overline{\R}^N_+)$ solves problem \eqref{pb modello} and 
\beq\label{hp su u}
\forall M>0 \quad \exists C(M)>0 \quad : \quad 0 \leq u(x) \leq C(M) \quad \forall x \in \R^{N-1} \times [0,M],
\eeq
then 
\[
u(x',x_N)=1-\cos x_N.
\]
\end{theorem}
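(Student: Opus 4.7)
My strategy is to first establish the ``hidden'' Dirichlet condition $u(x',2\pi)=0$, and then exploit a Fourier sine series in $x_N$ on the resulting strip $\R^{N-1}\times(0,2\pi)$ to identify $u$ uniquely. The key observation driving the plan is that the target profile $v(x_N)=1-\cos x_N$ is $2\pi$-periodic with $v(2\pi)=0$, so one expects every admissible solution to vanish on the slice $\{x_N=2\pi\}$ as well.

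To realize this first step I would multiply the equation by $\sin x_N$ and integrate in $x_N\in(0,2\pi)$. Setting
\[
F(x'):=\int_0^{2\pi} u(x',x_N)\sin x_N\,dx_N,
\]
which is bounded on $\R^{N-1}$ by \eqref{hp su u}, two integrations by parts (using $u(x',0)=0$ and the vanishing of $\sin x_N$ at both endpoints of $(0,2\pi)$) reduce everything to the clean identity $\Delta_{x'}F(x')=u(x',2\pi)$. Since $u\ge 0$, $F$ is bounded and subharmonic on $\R^{N-1}$. The dimensional restriction enters essentially here: for $N-1\in\{1,2\}$, bounded subharmonic functions on $\R^{N-1}$ are constant (bounded convex functions on $\R$; classical Liouville for subharmonic functions on $\R^2$), so $\Delta_{x'}F\equiv 0$ and hence $u(\cdot,2\pi)\equiv 0$.

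With $u$ vanishing on both $\{x_N=0\}$ and $\{x_N=2\pi\}$, I would expand $u(x',x_N)=\sum_{k\ge 1} a_k(x')\sin(kx_N/2)$ with bounded coefficients and match against $1=\sum_{k\text{ odd}}\frac{4}{\pi k}\sin(kx_N/2)$ to obtain
\[
-\Delta_{x'} a_k + \Bigl(\tfrac{k^2}{4}-1\Bigr)a_k=-\tfrac{4}{\pi k}\,\chi_{\{k\text{ odd}\}}\qquad(k\ge 1).
\]
For $k\ge 3$ the zeroth-order coefficient is strictly positive, and the maximum principle pins $a_k$ to the unique bounded (constant) particular solution; for $k=2$, $a_2$ is bounded harmonic on $\R^{N-1}$ and hence constant ($=:C_2$) by Liouville in dimension $N-1\le 2$. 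All of these coefficients already coincide with those of $v$, so $u-v$ is carried only by the $k=1$ and $k=2$ modes.

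The main obstacle is the mode $k=1$. Writing $a_1=\tfrac{16}{3\pi}+h(x')$ yields $-\Delta h=\tfrac{3}{4}h$ with $h$ bounded, a Helmholtz equation with positive spectrum that admits many bounded solutions, so the PDE alone does not determine $h$. The full non-negativity of $u$ must be brought in: substituting back and factoring $\sin(x_N/2)\ge 0$ on $[0,2\pi]$, the inequality $u\ge 0$ rewrites as
\[
\sin(x_N/2)\bigl[2\sin(x_N/2)+h(x')+2C_2\cos(x_N/2)\bigr]\ge 0,
\]
and minimizing the bracketed expression in $x_N\in[0,2\pi]$ gives $h(x')\ge 2|C_2|\ge 0$ uniformly in $x'$. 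Then $\Delta h=-\tfrac{3}{4}h\le 0$ shows $h$ is a bounded nonnegative superharmonic function, so the same low-dimensional Liouville theorem used in the first step forces $h$ constant; the equation then forces that constant to be $0$, hence $h\equiv 0$, $C_2=0$, and finally $u(x',x_N)=1-\cos x_N$.
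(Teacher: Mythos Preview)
Your proof is correct, and the opening step---multiplying by $\sin x_N$ and using the Liouville theorem for bounded subharmonic functions on $\R^{N-1}$ ($N-1\le 2$) to deduce $u(\cdot,2\pi)\equiv 0$---coincides exactly with the paper's first move (there phrased via the Fourier coefficient $b_1$). After that, however, you diverge. The paper expands $u$ in the \emph{full} Fourier basis $\{\cos(mx_N),\sin(mx_N)\}$ of period $2\pi$. This buys an extra piece of information: the equation for the $\cos x_N$ coefficient reads $\Delta' a_1=\tfrac1\pi u_N(x',0)\ge 0$, so a second application of the subharmonic Liouville theorem yields $u_N(\cdot,0)\equiv 0$. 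With both boundary traces gone, every mode $m\ge 2$ satisfies the homogeneous equation $-\Delta' v+(m^2-1)v=0$ and is killed at once, leaving a three-term trigonometric polynomial to identify. Your choice of the Dirichlet sine basis $\{\sin(kx_N/2)\}$ encodes only the zero Dirichlet data and therefore does \emph{not} yield $u_N(\cdot,0)=0$; the price is the troublesome $k=1$ mode, whose zeroth-order coefficient $\tfrac14-1$ has the wrong sign. You overcome this nicely by factoring $u=\sin(x_N/2)\bigl[2\sin(x_N/2)+h(x')+2C_2\cos(x_N/2)\bigr]$, reading off $h\ge 2|C_2|\ge 0$ from the endpoint limits, and then applying Liouville for bounded \emph{superharmonic} functions to $h$. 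Both routes are valid; the paper's is shorter, while yours illustrates how the sign constraint can substitute for the missing Neumann information. One small omission: your argument, as written, only identifies $u$ on $\R^{N-1}\times[0,2\pi]$; you should add (as the paper does) that $v_1(x',x_N):=u(x',x_N+2\pi)$ again satisfies all the hypotheses, so iteration propagates the conclusion to the whole half-space.
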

\noindent This is a result of uniqueness and of $1$-D symmetry, i.e. the (unique) solution of \eqref{pb modello} is a function depending only on $x_N$. Note that assumption \eqref{hp su u} means that $u$ is nonnegative and bounded in every strip of type $\R^{N-1} \times [0,M]$.\\
Unfortunately, we will see that the assumption "$N=2$ or $3$" is substantial for our proof. However, we can still say something for the model problem in higher dimension. This will be the object of subsection \ref{sec higher}. 

As far as the generalization towards problem \eqref{pb completo} is concerned, we will see in section \ref{section div}, Theorem \ref{teo pb div}, that the presence of $div \left(A(x)\nabla \right)$ instead of the laplacian does not affect the previous result, under suitable assumptions on $A$.\\
A further natural generalization of problem \eqref{pb modello} consists in introducing a $g$ depending only on $x_N$ instead of the constant function $1$: 
\beq\label{pb completo 1D}
\begin{cases}
-div\left(A(x)\nabla u\right)=u-g(x_N) & \text{in $\R^N_+$}\\
u=0 & \text{on $\pa \R^N_+$}
\end{cases}
\eeq
In this setting, Theorem \ref{theorem pb  complete 1D 2} is the counterpart of Theorem \ref{teo pb modello}; as an immediate corollary we have
\begin{corollary}
Let $N=2$ or $3$. Under suitable assumptions on $A$ and on $g$, if $u \in \mathcal{C}^2(\overline{\R}^N_+)$ solves \eqref{pb completo 1D} and satisfies \eqref{hp su u}, then $u$ is uniquely determined and depends only on $x_N$.
\end{corollary}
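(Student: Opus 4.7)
The plan is to obtain the corollary as a direct reading of Theorem \ref{theorem pb  complete 1D 2}. Since the statement is presented in the paper as an \emph{immediate} consequence of that theorem, no new analytic work is required: the proof amounts to transcribing the conclusion of Theorem \ref{theorem pb  complete 1D 2} in the weaker qualitative form that the corollary records.

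More precisely, Theorem \ref{theorem pb  complete 1D 2} is described as the counterpart of Theorem \ref{teo pb modello} for problem \eqref{pb completo 1D}. I expect its conclusion to take the form: under the structural hypotheses on $A$ and $g$ packaged in its statement, every $u \in \mathcal{C}^2(\overline{\R}^N_+)$ that solves \eqref{pb completo 1D} and satisfies \eqref{hp su u} coincides with a specific profile $v(x_N)$, where $v$ is the unique bounded nonnegative solution of the one-dimensional boundary value problem obtained from \eqref{pb completo 1D} by freezing the $x'$ variables (so $v(0)=0$ and $v$ solves the ODE corresponding to the $x_N$-component of $-\mathrm{div}(A\nabla\cdot)$ with right-hand side $v-g$). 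From this identity $u(x',x_N)=v(x_N)$ the corollary follows at once: the equality exhibits $u$ as a function of $x_N$ alone, which is the $1$-D symmetry assertion, and the uniqueness of $v$ in its own ODE problem transfers to uniqueness of $u$ in \eqref{pb completo 1D} under \eqref{hp su u}. So the proof consists of one sentence invoking Theorem \ref{theorem pb  complete 1D 2} and one sentence reading off these two consequences.

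The only conceivable obstacle is bookkeeping, namely verifying that the phrase ``suitable assumptions on $A$ and $g$'' in the corollary is to be interpreted as exactly the hypotheses carried by Theorem \ref{theorem pb  complete 1D 2}; since the corollary is explicitly labelled as an immediate corollary, this identification is definitional rather than mathematical. I do not anticipate any substantive technical step beyond quoting the theorem.
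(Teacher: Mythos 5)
Your proposal is correct and matches the paper exactly: the paper offers no separate argument for this corollary, presenting it as an immediate consequence of Theorem \ref{theorem pb  complete 1D 2}, whose hypotheses ($A$ of the block form \eqref{hp 1 su A} with \eqref{hp 2 su A}, $g$ continuous, $2\pi$-periodic with $c_1\ge 0$, $d_1\ge 0$, forcing $c_1=d_1=0$) are precisely the ``suitable assumptions'' alluded to. Your reading of the conclusion is also accurate, since the block structure of $A$ (with $a_{NN}=1$) makes the one-dimensional problem exactly $-u''=u-g(x_N)$ with $u(0)=u(2\pi)=0$, $u'(0)=u'(2\pi)=0$, as in the theorem.
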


Finally, always in section \ref{section general}, we will show how to use the method developed in the previous sections in order to deal with a wider class of inhomogeneous terms (depending also on $x'$), obtaining sharp results for some particular cases; for instance, we will see that if $g=g(x')$ and there exists a solution $u$ of \eqref{pb completo} satisfying \eqref{hp su u}, then $g$ has to be constant. 

\medskip

The interest in the model problem comes from Berestycki, Caffarelli and Nirenberg: in \cite{BeCaNi} they proved that a \emph{positive and bounded} solution to \eqref{pb modello} does not exist when $N \le 3$. Their result fits in a wider study of $1$-D symmetry and monotonicity for \emph{positive and bounded} solutions to
\beq\label{pb f}
\begin{cases}
-\D u=f(u) & \text{in $\R_+^N$}\\
u=0 & \text{on $\pa \R_+^{N}$},
\end{cases} 
\eeq
with $f$ Lipschitz continuous. If $N \geq 2$ and $f(0) \geq 0$, then a positive and bounded solution is strictly increasing in the $x_N$ variable (see \cite{BeCaNi,Da}). Furthermore, always in \cite{BeCaNi}, if $N \leq 3$, $f \in \mathcal{C}^1(\R)$ and $f(0) \geq 0$, they showed that a positive and bounded solution depends only on one variable ($1$-D symmetry). Another contribution, contained in \cite{BeCaNi}, is that the monotonicity and the $1$-D symmetry hold  true for $N=2$ without any restriction on the sign of $f(0)$. The proofs of the quoted results are based on the moving planes method and on a previous result in \cite{BeCaNi1}, where it is shown that if $u$ is a positive and bounded solution of \eqref{pb f} and
\[
f(M) \leq 0 \quad \text{where} \quad M= \sup_{x \in \R^N_+} u(x),
\]
then $u$ is symmetric and monotone, and $f(M)=0$. When $f$ is a power (thus $f(0)=0$), similar results has been achieved in \cite{GiSp,Gui}. We point out that our contribution is not included in the existing literature, because we are considering \emph{nonnegative and not necessarily bounded} solutions, and because in general we are interested in the case $f(0)<0$. In such a situation the moving planes method gives just partial results, as shown by Dancer \cite{Da1}. We emphasize the fact that the difference between \emph{positive} and \emph{nonnegative} is substantial for $f(0)<0$, since in this case natural solutions are nonnegative and non-monotone, and a positive solution does not necessarily exists; this is clearly the case of the model problem \eqref{pb modello}. For all these reasons, our approach is different and it is based upon a combination of Fourier series and Liouville theorems. \\
To complete the essential bibliography for this kind of problems, we mention also the work \cite{FaVa}, where symmetry and monotonicity are obtained under weaker regularity assumptions on $f$, and an extension in dimension $4$ and $5$ is given for a wide class of nonlinearities.

\begin{notation}
We will consider problems in the half space $\R^N_+:=\R^{N-1} \times (0,+\infty)$. As usual, we will denote by $(x',x_N)$ a point of $\R^{N}_+$. \\
The symbols $\nabla'$, $div'$ or $\Delta'$ will be used respectively for the gradient, the divergence or the laplacian in $\R^{N-1}$. \\
The notation $u_j$ will be used to indicate the partial derivative of $u$ with respect to the $x_j$ variable.
For any $x \in \R^N$, for any $R>0$, we will write $B_R(x)$ to indicate the ball of centre $x$ and radius $R$. If $x=0$, we will simply write $B_R$.\\
For any $A \subset \R^N$, $\chi_A$ will denote the characteristic function of $A$.\\
We will use the notation $\left\langle \cdot,\cdot\right\rangle $ for the usual scalar product in any euclidean space.\\
Given a real valued function $v$, we denote its positive part as $v^+$.
\end{notation}

\section{The model problem}\label{sezione pb modello}

In this section we consider problem \eqref{pb modello}:
\[
\begin{cases}
-\Delta u = u -1& \text{in $\R^N_+$}\\
u=0 & \text{on $\pa \R^N_+$}.
\end{cases}
\]
We aim at proving Theorem \ref{teo pb modello}.
\begin{remark}Assumption \eqref{hp su u} says that $u$ is nonnegative in the whole $\R_+^N$ and bounded in every strip of type $\R^{N-1} \times [0,M]$ (but with arbitrary growth in the $x_N$-direction). Assumption \eqref{hp su u} is obviously satisfied if $u$ is nonnegative and bounded. 
Actually it is sufficient to assume that $u$ is nonnegative and $\nabla u$ is bounded, in order to ensure \eqref{hp su u}. Indeed for every $M>0$ we have
\begin{align*}
|u(x',x_N)| &=|u(x',x_N)-u(x',0)| \leq \sup_{\xi \in [0,x_N]} |\nabla u(x',\xi)| x_N  \\
&\leq \| \nabla u\|_{\infty} M=C(M) \qquad \forall (x',x_N) \in \R^{N-1} \times [0,M].
\end{align*}
In particular we recover the non-existence result of Berestycki, Caffarelli and Nirenberg.
\end{remark}

First we focus on the problem \eqref{pb modello} in the strip $\bar \Sigma = \R^{N-1} \times [0,2\pi]$, $N\ge2$. For every $ x' \in \R^{N-1}$, we denote by $  \tilde u(x',\cdot)$ the $2\pi$-periodic extension of $ x_N \mapsto u(x',x_N)$. In view of the smoothness of $u$, it follows that the Fourier expansion of $x_N \mapsto \tilde u(x',x_N)$, i.e. 

\beq\label{espansione}
\frac{a_0(x')}{2}+\sum_{m=1}^{+\infty} \left(a_m(x')\cos{(m x_N)} + b_m(x')\sin{(m x_N)}\right),
\eeq
where
\beq\label{coefficienti} 
\begin{split}
& a_m(x'):= \frac{1}{\pi}\int_0^{2\pi} u(x',x_N)\cos{(m x_N)}\, dx_N \qquad \forall m \geq 0,\\
& b_m(x'):= \frac{1}{\pi}\int_0^{2\pi} u(x',x_N)\sin{(m x_N)}\, dx_N \qquad \forall m \geq 1,
\end{split}
\eeq
is convergent. 


\noindent Now we determine the equations satisfied by the coefficients above.

\begin{lemma}\label{equazioni per i coefficienti di Fourier}
Let $N\ge2$. For any $m \geq 1$ we have
\begin{align}
&\Delta' a_m(x')=(m^2-1)a_m(x')+\frac{1}{\pi}\left(u_N(x',0)-u_N(x',2\pi)\right) \label{eq per a_m 0}\\
&\Delta' b_m(x')=(m^2-1)b_m(x')+\frac{m}{\pi}u(x',2\pi).\label{eq per b_m 0}
\end{align}
Also,
\[
\D' a_0(x')=2-a_0(x')+\frac{1}{\pi}\left(u_N(x',0)-u_N(x',2\pi)\right).
\]
\end{lemma}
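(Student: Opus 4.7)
My plan is to derive each equation by differentiating under the integral sign, substituting the PDE, and then integrating by parts twice in the $x_N$ variable to transfer $x_N$-derivatives onto the trigonometric weight. The regularity of $u$ together with the local bound in \eqref{hp su u} justify interchanging $\Delta'$ with the integral over the compact interval $[0,2\pi]$, giving
\[
\Delta' a_m(x')=\frac{1}{\pi}\int_0^{2\pi}\Delta' u(x',x_N)\cos(m x_N)\,dx_N,
\]
and analogously for $b_m$ and $a_0$. The PDE yields $\Delta' u = 1-u-u_{NN}$, so each right-hand side splits into three explicit pieces.

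For the $a_m$ equation with $m\ge 1$, the constant term integrates to zero, the term $-u\cos(m x_N)$ contributes $-\pi a_m$, and the remaining piece is handled by integrating by parts twice:
\[
\int_0^{2\pi}u_{NN}\cos(m x_N)\,dx_N = \bigl[u_N\cos(m x_N)\bigr]_0^{2\pi}+m\int_0^{2\pi}u_N\sin(m x_N)\,dx_N,
\]
and a second integration by parts in the last integral, using $\sin(0)=\sin(2\pi m)=0$, produces $-m^2\pi a_m$. Collecting terms and dividing by $\pi$ gives \eqref{eq per a_m 0}. The $m=0$ case is even simpler: no integration by parts is needed and $\int_0^{2\pi}1\,dx_N = 2\pi$ accounts for the $+2$ on the right-hand side.

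For the $b_m$ equation I proceed in the same way with $\sin(m x_N)$ in place of $\cos(m x_N)$. The key place where the boundary data enter differently is the second integration by parts: after writing
\[
\int_0^{2\pi}u_{NN}\sin(m x_N)\,dx_N = -m\int_0^{2\pi}u_N\cos(m x_N)\,dx_N
\]
(since $\sin$ vanishes at $0$ and $2\pi$), another integration by parts produces the boundary term $-m\bigl[u(x',2\pi)-u(x',0)\bigr]$, and here the Dirichlet condition $u(x',0)=0$ eliminates the second contribution, leaving the clean $\frac{m}{\pi}u(x',2\pi)$ term displayed in \eqref{eq per b_m 0}.

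The only subtle step is the exchange of $\Delta'$ with $\int_0^{2\pi}$, but I expect no real obstacle: by \eqref{hp su u} and the equation $\Delta' u = 1-u-u_{NN}$, the function $u$ and its derivatives up to order two are uniformly bounded on each strip $\R^{N-1}\times[0,2\pi]$, so standard dominated convergence applied to difference quotients in $x'$ justifies differentiating under the integral. Everything else is a careful bookkeeping of the boundary evaluations at $x_N=0$ and $x_N=2\pi$.
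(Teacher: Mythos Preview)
Your proposal is correct and follows essentially the same route as the paper: differentiate under the integral, substitute $\Delta' u = 1 - u - u_{NN}$, and integrate by parts twice in $x_N$, tracking the boundary contributions at $x_N=0$ and $x_N=2\pi$. The only difference is that you add a justification for exchanging $\Delta'$ with the integral, which the paper takes for granted from $u\in\mathcal{C}^2(\overline{\R}^N_+)$; note, though, that you do not actually need uniform bounds on the derivatives across the whole strip---continuity of second derivatives and compactness of $[0,2\pi]$ already suffice to apply the Leibniz rule at each fixed $x'$.
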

\begin{proof}
For any $m \geq 1$ we have
\begin{align*}
\Delta' a_m(x') &= \frac{1}{\pi} \int_0^{2\pi} \Delta' u(x',x_N) \cos{(m x_N)}\,dx_N \\
&= \frac{1}{\pi} \int_0^{2\pi} \left(1-u(x',x_N)-u_{NN}(x',x_N)\right) \cos{(m x_N)}\, dx_N \\
& = -\frac{1}{\pi} \int_0^{2\pi} \left(u(x',x_N) + u_{NN}(x',x_N)\right)\cos{(m x_N)}\,dx_N.
\end{align*}
Integrating by parts twice the last term we obtain
\begin{align*}
\Delta' a_m(x') = & -\frac{1}{\pi} \int_0^{2\pi} \left(u(x',x_N) \cos{(m x_N)}+ m u_{N}(x',x_N)\sin{(m x_N)} \right)\,dx_N\\
& - \frac{1}{\pi}\left[u_N(x',x_N)\cos{(m x_N)}\right]_{x_N=0}^{2\pi}\\
= & \frac{m^2-1}{\pi} \int_0^{2\pi} u(x',x_N) \cos{(m x_N)}\,dx_N \\
&- \frac{1}{\pi}\left(u_N(x',2\pi)-u_N(x',0)\right),
\end{align*} 
which is equation \eqref{eq per a_m 0}.\\
With the same procedure we can find equation \eqref{eq per b_m 0}: for any $m \geq 1$
\begin{align*}
\Delta' b_m(x')  = &\frac{1}{\pi} \int_0^{2\pi} \Delta' u(x',x_N) \sin{(m x_N)}\,dx_N \\
= &\frac{1}{\pi} \int_0^{2\pi} \left(1-u(x',x_N)-u_{NN}(x',x_N)\right) \sin{(m x_N)}\, dx_N \\
= &\frac{1}{\pi} \int_0^{2\pi} \left(-u(x',x_N)\sin{(m x_N)} + m u_{N}(x',x_N)\cos{(m x_N)}\right)\,dx_N \\
= &\frac{m^2-1}{\pi} \int_0^{2\pi} u(x',x_N) \sin{(m x_N)}\,dx_N \\
&+ \frac{m}{\pi}\left[u(x',x_N)\cos{(m x_N)}\right]_{x_N=0}^{2\pi}. 
\end{align*}
As far as $a_0$ is concerned, we have
\begin{align*}
\Delta' a_0(x') &=\frac{1}{\pi}\int_0^{2\pi} \Delta' u(x',x_N) \, dx_N \\
& = \frac{1}{\pi} \int_0^{2\pi} \left(1-u(x',x_N)-u_{NN}(x',x_N)\right)\,dx_N \\
& = 2 - \frac{1}{\pi}\int_0^{2\pi}u(x',x_N)\,dx_N + \frac{1}{\pi} \left(u_N(x',0)-u_N(x',2\pi)\right). \qedhere
\end{align*}
\end{proof}

\begin{lemma}
Both $b_1$ and $a_1$ are constant; moreover, 
\[
u(x',2\pi) =0, \quad u_N(x',0)=0  \quad \text{and} \quad u_N(x',2\pi)=0 \quad \forall x' \in \R^{N-1}.
\]
\end{lemma}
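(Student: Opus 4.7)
The plan is to specialize the Fourier coefficient equations from the previous lemma to the critical index $m=1$, where the coefficient $m^2-1$ vanishes, and then use the sign of the inhomogeneous terms together with a classical Liouville theorem for bounded subharmonic functions on $\R^{N-1}$.

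First I would set $m=1$ in the equations of the previous lemma to obtain
\[
\Delta' b_1(x') = \frac{1}{\pi}u(x',2\pi), \qquad \Delta' a_1(x') = \frac{1}{\pi}\bigl(u_N(x',0)-u_N(x',2\pi)\bigr).
\]
Applying hypothesis \eqref{hp su u} with $M=2\pi$, $u$ is bounded on the strip $\R^{N-1}\times[0,2\pi]$, so a direct estimate of the defining integrals in \eqref{coefficienti} shows $a_1$ and $b_1$ are bounded functions on $\R^{N-1}$.

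Next I would handle $b_1$. Since $u\geq 0$, the right-hand side of the equation for $b_1$ is nonnegative, so $b_1$ is a bounded subharmonic function on $\R^{N-1}$. Because $N=2$ or $3$, we have $N-1\in\{1,2\}$, and in these dimensions every bounded subharmonic function on the whole Euclidean space is constant (this is the key Liouville-type input, which fails in dimension $\geq 3$ and thus explains the dimensional restriction). Hence $\Delta' b_1\equiv 0$, forcing $u(x',2\pi)\equiv 0$. Since $u\geq 0$ in $\R^N_+$ and vanishes on the hyperplane $\{x_N=2\pi\}$, that hyperplane consists of interior minima in the $x_N$-direction, so $u_N(x',2\pi)=0$ for every $x'\in\R^{N-1}$.

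With this information plugged back, the equation for $a_1$ simplifies to
\[
\Delta' a_1(x') = \frac{1}{\pi}u_N(x',0).
\]
Since $u\geq 0$ in $\R^N_+$ and $u(x',0)=0$, the boundary Neumann derivative satisfies $u_N(x',0)\geq 0$. Thus $a_1$ is again bounded and subharmonic on $\R^{N-1}$, and the same low-dimensional Liouville theorem yields $a_1$ constant, i.e.\ $u_N(x',0)\equiv 0$, completing the statement.

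The main obstacle is really the Liouville step: one must know that bounded subharmonic functions on $\R^n$ are constant exactly when $n\leq 2$ (a consequence of the recurrence of planar Brownian motion, or equivalently of the logarithmic growth of the fundamental solution). Everything else is bookkeeping: reading off signs from the nonnegativity of $u$, and using \eqref{hp su u} to get the boundedness on the single strip of width $2\pi$ needed here.
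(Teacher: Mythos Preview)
Your proposal is correct and follows essentially the same route as the paper: specialize the coefficient equations at $m=1$, use $u\ge 0$ to get $\Delta' b_1\ge 0$, apply the Liouville theorem for bounded subharmonic functions on $\R^{N-1}$ (valid since $N-1\le 2$) to conclude $b_1$ is constant and hence $u(x',2\pi)=0$, deduce $u_N(x',2\pi)=0$ from the interior minimum, and then repeat the argument for $a_1$ using $u_N(x',0)\ge 0$. This matches the paper's proof step for step.
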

\begin{proof}
Using \eqref{eq per b_m 0} with $m=1$ we have 
\[
\Delta' b_1(x')=\frac{1}{\pi}u(x',2\pi)\geq 0.
\]
Therefore, thanks to \eqref{hp su u}, $b_1$ is a subharmonic and bounded function in $\R^{N-1}$ with $N=2$ or $3$; the Liouville theorem for subharmonic functions implies that it is constant, so that in particular $\D' b_1 \equiv 0$, i.e. $u(x',2\pi)=0$ for every $x' \in \R^{N-1}$.\\
Note that, since $u\geq0$ and $u(x',2\pi)=0$, each $(x',2\pi)$ is a point of minimum for $u$; consequently $u_N(x',2\pi)=0$, and this makes possible to prove that also $a_1$ is constant: indeed 
\[
\Delta' a_1(x')=\frac{1}{\pi}u_N(x',0) .
\]
Since $u(x',0)=0$ and $u\geq 0$ in $\Sigma$, it follows that $u_N(x',0) \geq 0$ for every $x' \in \R^{N-1}$. Hence $a_1$ is a subharmonic and bounded function in $\R$ or $\R^2$, and has has to be constant. It follows in particular that $u_N(x',0)\equiv 0$ in $\R^{N-1}$.
\end{proof}

An important consequence of the previous Lemma is that the equations for $a_m$ and $b_m$ simplify as
\begin{align}
&\Delta' a_m(x')=(m^2-1)a_m(x') \qquad \forall m \geq 2 \label{eq per a_m 1}\\
&\Delta' b_m(x') = (m^2-1) b_m(x') \qquad \forall m \geq 2 \label{eq per b_m 1} .
\end{align}
Hence, for $m \geq 2$, $a_m$ and $b_m$ satisfy an equation of type
\beq\label{eq per m>1}
-\Delta' v(x')+\lambda v(x')=0 \qquad \text{in $\R^{N-1}$},
\eeq
with $\lambda>0$. We point out that both $a_m$ and $b_m$ are bounded in absolute value in $\Sigma$ (this follows from assumption \eqref{hp su u}). \\
Bounded solutions of  \eqref{eq per m>1}  has to vanish identically. This is an immediate  consequence of the following general result.

\begin{lemma}\label{coefficienti nulli} 
Assume $N\ge2$ and let $v \in \mathcal{C}^2(\R^{N-1})$ be a subsolution of 
\beq\label{diseq per m>1}
-\Delta' v(x')+c(x')v(x')\le 0 \qquad \text {in $\R^{N-1}$},
\eeq
with $c(x') \ge \lambda > 0$ {in $\R^{N-1}$}. \\
If $v^+$ has at most algebraic growth at infinity, then $v \le 0$ {in $\R^{N-1}$},
\end{lemma}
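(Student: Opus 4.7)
I argue by contradiction, assuming $v(x_0')>0$ at some $x_0'\in\R^{N-1}$, and I construct a family of radial barriers $\Psi_R$ that dominate $v$ on $B_R\subset\R^{N-1}$ but collapse to zero at $x_0'$ as $R\to\infty$. The natural candidate is a positive radial supersolution of $-\Delta'+\lambda$ that itself grows exponentially. Taking $\Phi_\mu(x'):=\cosh(\mu|x'|)$, a direct computation yields
\[
-\Delta'\Phi_\mu + \lambda\Phi_\mu = (\lambda-\mu^2)\cosh(\mu|x'|) - \frac{(N-2)\mu}{|x'|}\sinh(\mu|x'|),
\]
and, using $\tanh(\mu r)\le \mu r$, this is nonnegative as soon as $\mu^2\le \lambda/(N-1)$. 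I fix such a $\mu>0$. Since $c(x')\ge \lambda$ and $\Phi_\mu>0$, this upgrades to $-\Delta'\Phi_\mu + c(x')\Phi_\mu\ge 0$, so $\Phi_\mu$ is a positive supersolution of the operator actually appearing in the hypothesis.

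For each $R>|x_0'|$, I set $M_R:=\sup_{\partial B_R} v^+$, finite by the algebraic-growth hypothesis, and define the rescaled barrier
\[
\Psi_R(x'):=\frac{M_R}{\cosh(\mu R)}\,\Phi_\mu(x').
\]
By construction $\Psi_R\equiv M_R$ on $\partial B_R$ and $\Psi_R$ is still a supersolution of $-\Delta'+c$ in $B_R$. Consequently $v-\Psi_R$ satisfies $-\Delta'(v-\Psi_R)+c\,(v-\Psi_R)\le 0$ in $B_R$ together with $v-\Psi_R\le v^+ - M_R\le 0$ on $\partial B_R$. The weak maximum principle for $-\Delta'+c$ with $c\ge 0$ then gives $v\le\Psi_R$ throughout $B_R$; this is the standard argument that $(v-\Psi_R)^+$ is subharmonic on the set where it is positive (there $\Delta'(v-\Psi_R)\ge c(v-\Psi_R)\ge 0$) and must therefore attain its maximum on the boundary.

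Evaluating at $x_0'$ and letting $R\to+\infty$, the algebraic bound $M_R\le C(1+R^k)$ together with the exponential growth of $\cosh(\mu R)$ forces $\Psi_R(x_0')\to 0$, whence $v(x_0')\le 0$, a contradiction. The only genuinely delicate step is the explicit supersolution computation, which dictates the admissible range of $\mu$; everything else is a routine comparison argument that works verbatim in every dimension $N-1\ge 1$.
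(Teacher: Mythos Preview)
Your proof is correct and takes a genuinely different route from the paper's. The paper proceeds by an energy method: it multiplies the differential inequality by $v^+\varphi_R^2$ for a standard cutoff $\varphi_R$, uses Cauchy--Schwarz and Young to obtain
\[
\int_{B_R}(v^+)^2 \le \frac{C}{\lambda R^2}\int_{B_{2R}}(v^+)^2,
\]
and then combines this with the algebraic bound $\int_{B_R}(v^+)^2\le C'R^{N+2k}$ through an elementary iteration lemma (their Lemma~2.4) to force $\int_{B_R}(v^+)^2=0$ for all large $R$. Your argument instead builds an explicit positive radial supersolution $\cosh(\mu|x'|)$ of $-\Delta'+\lambda$ and runs a pointwise comparison on balls, letting the exponential growth of the barrier defeat the algebraic growth of $v^+$.

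Both approaches exploit the same gap between polynomial and exponential behaviour, but they package it differently. Your barrier method is arguably more transparent and yields the conclusion pointwise without any integration-by-parts machinery. The paper's integral method, on the other hand, is tailor-made for generalisation: it carries over verbatim to the divergence-form operators $\mathrm{div}'\!\bigl(\widehat A(x')\nabla'\bigr)$ considered later in the paper (their Lemma~3.5), where no explicit supersolution like $\cosh(\mu|x'|)$ is available because $\widehat A$ is merely bounded and positive definite, not uniformly elliptic. So your proof is cleaner for the Laplacian, while theirs is the one that survives the passage to variable coefficients.
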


For the proof, it will be useful the following Lemma.

\begin{lemma}\label{lemma tecnico}
Let $\vartheta>0$, $\gamma>0$, be such that $\vartheta<2^{-\gamma}$. Let $R_0>0$, $C>0$ and $I:(R_0,+\infty) \to [0,+\infty)$ be such that
\beq\label{eq per I}
\begin{cases}
I(R)\leq \vartheta I(2R) & \forall R>R_0\\
I(R) \leq CR^\gamma & \forall R>R_0.
\end{cases}
\eeq
Then $I(R)=0$ for every $R>R_0$.
\end{lemma}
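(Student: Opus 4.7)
The plan is to use a standard dyadic iteration argument. The key observation is that the first inequality in \eqref{eq per I} can be iterated: for any $R > R_0$ and any $n \in \N$, repeated application yields
\[
I(R) \leq \vartheta I(2R) \leq \vartheta^2 I(4R) \leq \cdots \leq \vartheta^n I(2^n R).
\]
Since $2^n R > R > R_0$ for every $n \geq 0$, the second inequality in \eqref{eq per I} applies at $2^n R$, giving $I(2^n R) \leq C(2^n R)^\gamma = C R^\gamma 2^{n\gamma}$.

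Combining the two estimates, I would conclude
\[
0 \leq I(R) \leq C R^\gamma \, (\vartheta \, 2^\gamma)^n \qquad \forall n \in \N.
\]
By hypothesis $\vartheta < 2^{-\gamma}$, hence $\vartheta \, 2^\gamma < 1$, and letting $n \to +\infty$ forces $I(R) = 0$. Since $R > R_0$ was arbitrary, the lemma follows.

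There is no real obstacle here: the two inequalities in \eqref{eq per I} are precisely tailored so that the geometric factor $\vartheta^n$ beats the polynomial factor $2^{n\gamma}$. The only point worth checking carefully is that at every step of the iteration the argument $2^k R$ remains in the domain $(R_0, +\infty)$ where both inequalities of \eqref{eq per I} are valid, which is immediate since $R_0 < R < 2R < 4R < \cdots$.
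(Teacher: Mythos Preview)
Your proof is correct and follows essentially the same approach as the paper: iterate the first inequality to obtain $I(R)\le \vartheta^n I(2^n R)$, apply the growth bound at $2^n R$, and let $n\to\infty$ using $\vartheta\,2^\gamma<1$. Your added remark verifying that $2^k R$ stays in the domain $(R_0,+\infty)$ is a minor refinement not made explicit in the paper.
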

\begin{proof}
Iterating the first one of \eqref{eq per I} we obtain, for every $k \in \N$,
\[
I(R) \leq \vartheta^k I(2^k R) \qquad \forall R>R_0.
\]
Now the second one gives
\[
I(R) \leq C \left(\vartheta 2^\gamma\right)^k R^\gamma \qquad \forall R>R_0,\  \forall k \in \N.
\]
Since $0<\vartheta 2^\gamma <1$, letting $k \to \infty$ we obtain $I(R) \leq 0$ for $R>R_0$.
\end{proof}


\begin{proof}[Proof of Lemma \ref{coefficienti nulli}] 
We introduce a $\mathcal{C}^\infty$ cut-off function $\f: [0,+\infty) \to \R$ such that
\[
\begin{cases}
\f(t)=1 & t \in [0,1] \\
\f(t)=0 & t \in [2,+\infty)\\
0 \leq \f(t) \leq 1 & t \in (1,2).
\end{cases} 
\]
We set, for every $R>0$, $\f_R(x'):=\f(|x'|/R)$, which is defined on $\R^{N-1}$. Hence 
\[
\nabla' \f_R(x')= \frac{x'}{R|x'|}\f'\left(\frac{|x'|}{R}\right).
\]
In particular
\beq\label{magg su grad phi}
\left| \nabla ' \f_R(x') \right| \leq \frac{C}{R} \chi_{B_{2R}}(x')
\eeq
where $C$ is a constant independent of $R$.\\
Testing \eqref{diseq per m>1}  with $v^+ \f_R^2$ we get 
\begin{multline}\label{eq524}
\int_{\R^{N-1}} \left(|\nabla' v^+|^2 + \lambda \left(v^+\right)^2\right) \f_R^2 \le  
\int_{\R^{N-1}} \left(|\nabla' v^+|^2 +  c\left(v^+\right)^2\right) \f_R^2  \\
\le -2 \int_{\R^{N-1}} v^+ \f_R \left\langle \nabla' v^+,\nabla' \f_R\right\rangle \leq 2\int_{\R^{N-1}} v^+ \f_R \left|\left\langle \nabla' v^+,\nabla' \f_R\right\rangle \right|. 
\end{multline}
We can use the Cauchy-Schwarz and the Young inequalities: for every $\eps>0$ there exists $C_\eps>0$ such that
\begin{multline*}
2\int_{\R^{N-1}} v^+ \f_R \left|\left\langle \nabla' v^+,\nabla' \f_R\right\rangle \right| \leq 2 \int_{\R^{N-1}} v^+ \f_R |\nabla' v^+| |\nabla' \f_R| \\
\le 2\eps \int_{\R^{N-1}} \f_R^2 |\nabla' v^+|^2 + 2C_\eps \int_{\R^{N-1}} \left(v^+\right)^2 \left|\nabla' \f_R\right|^2.
\end{multline*}
Coming back to \eqref{eq524}, we obtain
\[
\int_{\R^{N-1}} \left( (1-2\eps) |\nabla' v^+|^2+ \lambda \left(v^+\right)^2 \right) \f_R^2 \leq 2C_\eps \int_{\R^{N-1}} \left(v^+\right)^2 |\nabla' \f_R|^2.
\]
Choosing $\eps < 1/2$ and using the \eqref{magg su grad phi}, we deduce
\[
\int_{B_R} \left(v^+\right)^2 \leq \frac{C}{\lambda R^2} \int_{B_{2R}} \left(v^+\right)^2.
\]
Also, since $v^+$ has at most algebraic growth at infinity,  we have for any $R > 1$
\[
\int_{B_R} (v^+)^2 \leq C' R^{N + 2k}
\]
for some $k\ge 0$, $C'>0$ independent of $R$.\\
We are in position to apply Lemma \ref{lemma tecnico}, with 
\[
I(R):=\int_{B_R} (v^+)^2.
\]
Here $\gamma=N+2k$; note that there exists $R_0>1$ such that $\frac{C'}{\lambda R^2} 
<2^{-N-2k}$ for every $R \ge R_0$. We set $\vartheta=\frac{C'}{\lambda R_0^2} $ and we apply Lemma \ref{lemma tecnico} to obtain 
\[ 
\int_{B_R} (v^+)^2 = 0 \qquad \forall R>R_0 \Rightarrow v^+ \equiv 0. \qedhere
\]
\end{proof}

\medskip
\begin{proof}[Conclusion of the proof of Theorem \ref{teo pb modello}]
Applying Lemma \ref{coefficienti nulli} to equations \eqref{eq per a_m 1} and \eqref{eq per b_m 1},  we have that the Fourier coefficients $a_m$ and $b_m$ are identically $0$ for any $m \geq 2$. Hence, the Fourier series \eqref{espansione} is reduced to
\beq\label{forma di u}
\frac{a_0(x')}{2}+a_1 \cos{x_N} + b_1 \sin{x_N}
\eeq
and, for every $x \in \bar \Sigma$, it is equal to $u(x)$.  
 
The initial condition $u(x',0)=0$ reads
\[
\frac{a_0(x')}{2}+a_1=0 \Rightarrow \text{$a_0$ is constant, equal to $-2 a_1$}.
\]
We also proved that $u_N(x',0)=0$, which implies $b_1=0$.\\
Plugging the expression of $u$ inside the equation $-\D u =u-1$ we obtain
\[
-a_1 \cos{x_N}+\frac{a_0}{2} +a_1 \cos{x_N}-1=0 \Rightarrow a_0=2,
\]
and hence $a_1 = -1$.\\
We proved that if $u \in \mathcal{C}^2(\overline{\R}_+^N)$ is a solution of \eqref{pb modello} satisfying \eqref{hp su u} then $u(x',x_N)=1-\cos{x_N}$ in $\bar \Sigma= \R^{N-1}\times [0,2\pi]$. \\
To extend the result in the whole $\R^N_+$ we set
\[
v_1(x',x_N):=u(x',x_N+2\pi).
\]
It is straightforward to check that $v_1$ is a nonnegative solution of \eqref{pb modello} and satisfies \eqref{hp su u}, so that it has to coincide with $1-\cos{x_N}$ in $\bar \Sigma$; this means that $u(x',x_N)=1-\cos{x_N}$ for $(x',x_N) \in \R^{N-1} \times [0,4\pi]$. The thesis follows by iteration of this argument.
\end{proof}

\subsection{The model problem in higher dimension}\label{sec higher}

In our proof it was crucial the possibility of applying the Liouville Theorem for subharmonic functions, which holds only in $\R$ and $\R^2$. Therefore, despite the fact that our statement seems to be natural in any dimension, we cannot prove it. However, it is still possible to collect some properties of any solution of problem \eqref{pb modello} satisfying \eqref{hp su u} for $N \geq 4$.\\
We can focus again on the problem in the strip $\bar \Sigma$, developing $u$ (or, better, its $2\pi$-periodic extension $\widetilde{u}$ in the $x_N$ variable) in formal Fourier series with respect to $x_N$. Note that Lemma \ref{equazioni per i coefficienti di Fourier} still holds true. Now, in our analysis the key properties of the solutions was 
\beq\label{passo base 1}
u(x',2\pi) \equiv 0 \quad \text{and} \quad u_N(x',0) \equiv 0 \quad \text{in $\R^{N-1}$}.
\eeq 
In this way, equations \eqref{eq per a_m 0} and \eqref{eq per b_m 0} are considerably simplified, since all the boundary terms have to vanish identically. This permits to get Theorem \ref{teo pb modello}.

\begin{proposition}\label{prop large N}
Let $N \geq 2$. Let $u \in \mathcal{C}^2(\overline{\R}^N_+)$ be a solution of problem \eqref{pb modello} which satisfies \eqref{hp su u}. Let $a_m$ and $b_m$ its formal Fourier coefficients, defined by \eqref{coefficienti}. Assume \eqref{passo base 1} holds true. Then
\[
u(x',x_N)=1-\cos x_N.
\]
\end{proposition}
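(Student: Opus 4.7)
The plan is to rerun the same Fourier-series argument used in the proof of Theorem \ref{teo pb modello} on the strip $\bar\Sigma$, with the assumption \eqref{passo base 1} playing the role that Liouville's theorem for subharmonic functions played in the low-dimensional case; once the identity $u(x',x_N)=1-\cos x_N$ is established on $\bar\Sigma$, I would extend it to the whole half-space by analytic continuation.

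More precisely: since $u(x',2\pi)\equiv 0$, equation \eqref{eq per b_m 0} with $m=1$ becomes $\Delta' b_1\equiv 0$, and because $b_1$ is bounded (thanks to \eqref{hp su u}), the classical Liouville theorem for \emph{harmonic} functions, which is valid in every dimension, forces $b_1$ to be constant. Next, since $u\ge 0$ and $u(x',2\pi)=0$, each point $(x',2\pi)$ is an interior minimum of $u$ on $\R^N_+$, so $u_N(x',2\pi)\equiv 0$; combining this with $u_N(x',0)\equiv 0$ from \eqref{passo base 1}, equation \eqref{eq per a_m 0} with $m=1$ reduces to $\Delta' a_1\equiv 0$, hence $a_1$ is constant as well. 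For $m\ge 2$, all the boundary terms in Lemma \ref{equazioni per i coefficienti di Fourier} vanish, so $a_m$ and $b_m$ are bounded solutions of $-\Delta' v+(m^2-1)v=0$; Lemma \ref{coefficienti nulli} then yields $a_m\equiv b_m\equiv 0$. Imposing $u(x',0)=0$ and $u_N(x',0)=0$, and substituting the resulting expression $u=\tfrac{a_0(x')}{2}+a_1\cos x_N+b_1\sin x_N$ into the PDE, fixes the constants exactly as in the proof of Theorem \ref{teo pb modello} (giving $a_0=2$, $a_1=-1$, $b_1=0$), so that $u(x',x_N)=1-\cos x_N$ on $\bar\Sigma$.

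To promote this identity from the strip to the full half-space I would appeal to analyticity. Because $-\Delta u=u-1$ has constant (in particular real-analytic) coefficients, interior elliptic regularity guarantees that the $\mathcal{C}^2$ solution $u$ is real-analytic on $\R^N_+$; the function $1-\cos x_N$ is obviously real-analytic there as well, and the two coincide on the nonempty open subset $\R^{N-1}\times(0,2\pi)$. The identity principle for real-analytic functions on the connected open set $\R^N_+$ then forces $u\equiv 1-\cos x_N$ throughout $\R^N_+$.

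The step I expect to be the only genuine subtlety is this very last one. A naive iteration that mimics the end of the proof of Theorem \ref{teo pb modello} does not transfer verbatim: one would need to verify the hypothesis \eqref{passo base 1} for the translated solution $v_1(x',x_N)=u(x',x_N+2\pi)$, and while $(v_1)_N(x',0)=u_N(x',2\pi)=0$ is free from what we already know, the identity $v_1(x',2\pi)=u(x',4\pi)=0$ is not available a priori. The analyticity/unique-continuation route sidesteps this difficulty, and it is the only place where we genuinely exploit the constant-coefficient structure of the model problem beyond the Fourier decomposition in $x_N$.
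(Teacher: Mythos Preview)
Your argument on the strip $\bar\Sigma$ coincides with the paper's proof. The only divergence is in the passage from $\bar\Sigma$ to all of $\R^N_+$: the paper simply writes ``conclude as in the proof of Theorem~\ref{teo pb modello}'', implicitly invoking the translation iteration $v_1(x',x_N)=u(x',x_N+2\pi)$, whereas you appeal to real-analyticity of $u$ and the identity principle. Your observation that the iteration does not transfer verbatim is correct: re-applying the Proposition to $v_1$ would require $v_1(x',2\pi)=u(x',4\pi)=0$, which is not part of hypothesis~\eqref{passo base 1}, so the paper's cross-reference glosses over a genuine point here. Your analyticity route closes this cleanly. One could equally well invoke unique continuation for the linear equation $-\Delta w=w$ satisfied by $w=u-(1-\cos x_N)$, which vanishes on the open set $\R^{N-1}\times(0,2\pi)$; this variant has the advantage of surviving the generalisation to the divergence-form operators of Section~\ref{section div} with merely Lipschitz coefficients, whereas the analyticity argument needs analytic coefficients.
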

\begin{proof}
Under assumption \eqref{passo base 1}, equations \eqref{eq per a_m 0} and \eqref{eq per b_m 0} are reduced to
\begin{align*}
&\D' a_m(x')=(m^2-1)a_m(x') \qquad \forall m \ge 1 \\
&\D' b_m(x')=(m^2-1)b_m(x') \qquad \forall m \ge 1
\end{align*}
(note that $u_N(x',2\pi)=0$ since $u(x',2\pi)=0$ and $u \geq 0$). Hence, Lemma \ref{coefficienti nulli} implies that $a_m \equiv 0 \equiv b_m$ for every $m \ge 2$, while from the classical Liouville theorem for harmonic function it follows that $a_1$ and $b_1$ are constant, so that 
\[
u(x',x_N)= \frac{a_0(x')}{2}+ a_1 \cos x_N + b_1 \sin x_N \qquad \text{in $\Sigma$}.
\]
Now we can conclude as in the proof of Theorem \ref{teo pb modello}.
\end{proof}

Also if we cannot prove \eqref{passo base 1}, it is possible to deduce something for the formal Fourier coefficients.

\begin{proposition}
Let $N \ge 2$. Let $u \in \mathcal{C}^2(\overline{\R}^N_+)$ be a solution of problem \eqref{pb modello} which satisfies \eqref{hp su u}. Let $a_m$ and $b_m$ its formal Fourier coefficients, defined by \eqref{coefficienti}. Then
\begin{itemize}
\item[($i$)] $b_m \leq 0$ for every $m \geq 2$.
\item[($ii$)] $\frac{b_n}{n} \geq \frac{b_m}{m}$ for every $n>m \geq 2$.
\item[($iii$)] for every $m \geq 2$, either $b_m<0$ or $b_m \equiv 0$ in $\R^{N-1}$.
\end{itemize}
\end{proposition}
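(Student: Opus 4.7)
My plan is to exploit the explicit equation for the Fourier coefficients $b_m$ derived in Lemma \ref{equazioni per i coefficienti di Fourier},
\[
\Delta' b_m(x') = (m^2 - 1)\, b_m(x') + \frac{m}{\pi}\, u(x', 2\pi),
\]
together with the two tools already developed in this section: Lemma \ref{coefficienti nulli} and the classical strong maximum principle. Before starting I would note two uniform facts: the assumption \eqref{hp su u} forces $u$ to be bounded on the strip $\Sigma$, and the Fourier formula \eqref{coefficienti} then gives $\|b_m\|_\infty \leq 2\|u\|_{L^\infty(\Sigma)}$ on $\R^{N-1}$ for every $m$; moreover $m^2 - 1 \geq 3 > 0$ as soon as $m \geq 2$, so Lemma \ref{coefficienti nulli} is applicable with the constant $c \equiv m^2-1$ whenever we can produce a subsolution inequality.

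For ($i$), I would simply rewrite the equation above as $-\Delta' b_m + (m^2-1) b_m = -\frac{m}{\pi} u(x',2\pi) \leq 0$, using $u \geq 0$. Since $b_m$ is bounded, Lemma \ref{coefficienti nulli} applies directly and yields $b_m \leq 0$ on $\R^{N-1}$.

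For ($ii$), the key observation is a normalization: set $v_m := b_m/m$, so that
\[
\Delta' v_m(x') = (m^2-1)\, v_m(x') + \frac{1}{\pi} u(x',2\pi),
\]
where the inhomogeneous term is now the \emph{same} for every $m$. Consequently, for $n > m \geq 2$, the difference $w := v_m - v_n$ satisfies
\[
-\Delta' w + (m^2-1) w = (n^2 - m^2)\, v_n \leq 0,
\]
the sign being a consequence of ($i$) together with $n > m \geq 2$. Since $w$ is bounded, Lemma \ref{coefficienti nulli} gives $w \leq 0$, i.e.\ $b_m/m \leq b_n/n$, which is precisely ($ii$).

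For ($iii$), I would work with $v := -b_m \geq 0$, which by the equation is a nonnegative supersolution of a uniformly elliptic operator with nonnegative zero-order coefficient on the connected set $\R^{N-1}$:
\[
-\Delta' v + (m^2-1) v = \frac{m}{\pi}\, u(x',2\pi) \geq 0.
\]
The strong maximum principle then yields the desired dichotomy: either $v$ attains $0$ at an interior point, in which case $v \equiv 0$ on $\R^{N-1}$ and thus $b_m \equiv 0$, or $v > 0$ everywhere and thus $b_m < 0$. Overall I do not foresee any real obstacle in this proof; the only step that is not completely automatic is the normalization $b_m \mapsto b_m/m$ in ($ii$), which is essentially forced by the factor $m$ appearing in the inhomogeneity and which converts the monotonicity claim into a clean application of Lemma \ref{coefficienti nulli}.
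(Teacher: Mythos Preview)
Your proof is correct and follows essentially the same route as the paper: part ($i$) is identical, part ($iii$) is the same strong maximum principle argument, and in part ($ii$) you perform the same normalization $b_m\mapsto b_m/m$ and apply Lemma \ref{coefficienti nulli} to the difference. The only cosmetic difference is that you add $(m^2-1)w$ to the equation for $w=v_m-v_n$ and use $v_n\le 0$, while the paper adds $(n^2-1)w$ and uses $v_m\le 0$; both choices work equally well.
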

\begin{proof}
($i$) For every $m \ge 2$ we have 
\beq\label{eq524*}
-\D' b_m(x')+(m^2-1)b_m(x')=-\frac{m}{\pi}u(x',2\pi) \leq 0.
\eeq
From Lemma \ref{coefficienti nulli}, \emph{which holds true in any dimension}, we deduce that 
\beq\label{eq525}
b_m \leq 0 \qquad \forall m \geq 2.
\eeq
($ii$) For $m \geq 2$, let us divide equation \eqref{eq per b_m 0} by $m$:
\[
\frac{\D' b_m(x')}{m}=\frac{m^2-1}{m}b_m(x')+\frac{1}{\pi}u(x',2\pi).
\]
If $n>m\geq 2$
\begin{multline*}
-\D'\left(\frac{b_m(x')}{m}-\frac{b_n(x')}{n}\right)+\left(n^2-1\right)\left(\frac{b_m(x')}{m}-\frac{b_n(x')}{n}\right) \\
= \left(n^2-m^2\right) \frac{b_m(x')}{m} \leq 0,
\end{multline*}
thanks to the fact that $b_m\leq 0$. Again, by means of Lemma \ref{coefficienti nulli}, we get
\[
\frac{b_n}{n}\geq \frac{b_m}{m} \qquad \forall \ n>m \geq 2.
\]
($iii$) if there exists $\bar x' \in \R^{N-1}$ such that $b_{m}(\bar x')=0$, the strong maximum principle implies $b_{m} \equiv 0$.
\end{proof}

It is particularly interesting to observe that, if we know that one $b_m$ vanishes in one point of $\R^{N-1}$, then we can recover Theorem \ref{teo pb modello}.

\begin{corollary}
Let $N \ge 2$. Let $u \in \mathcal{C}^2(\overline{\R}^N_+)$ be a solution of problem \eqref{pb modello} satisfying \eqref{hp su u}. Let $a_m$ and $b_m$ its formal Fourier coefficients, defined by \eqref{coefficienti}. \\
If there exist $\bar m \geq 2$ and $\bar x' \in \R^{N-1}$ such that $b_{\bar m}(\bar x') = 0$, then $u(x',x_N)=1-\cos x_N$.
\end{corollary}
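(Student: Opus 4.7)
The plan is to derive, from the single hypothesis $b_{\bar m}(\bar x') = 0$, both clauses of \eqref{passo base 1}, so that Proposition \ref{prop large N} delivers the conclusion $u = 1 - \cos x_N$.

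First, by part $(iii)$ of the preceding Proposition, the strong maximum principle upgrades $b_{\bar m}(\bar x') = 0$ to $b_{\bar m} \equiv 0$ on $\R^{N-1}$. Substituting this in \eqref{eq per b_m 0} with $m = \bar m$ gives $u(x',2\pi) \equiv 0$; since $u \ge 0$, each $(x',2\pi)$ is an interior minimum in $\R^N_+$, hence also $u_N(x',2\pi) \equiv 0$. With these two identities, \eqref{eq per b_m 0} reduces to $\Delta' b_m = (m^2-1) b_m$ for every $m \ge 1$: Lemma \ref{coefficienti nulli} applied to $b_m$ and to $-b_m$ yields $b_m \equiv 0$ for $m \ge 2$, and for $m = 1$ it shows $b_1$ bounded and harmonic on $\R^{N-1}$, hence a real constant by Liouville.

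Next I would recover the remaining condition $u_N(x',0) \equiv 0$ by examining the Fourier series in $x_N$ of the periodic extension $\wt{u}_N$ at its jump point $x_N = 0$. Since $u \in \mathcal{C}^2(\overline{\R}^N_+)$, the map $x_N \mapsto u_N(x', x_N)$ is of class $C^1$ on $[0,2\pi]$, in particular of bounded variation, so the classical Dirichlet-Jordan pointwise convergence theorem applies. Integration by parts, using $u(x',0) = u(x',2\pi) = 0$, shows that $\wt{u}_N$ has vanishing zero Fourier coefficient, cosine coefficients equal to $m b_m$, and sine coefficients equal to $-m a_m$. Evaluating the series at $x_N = 0$ and collapsing the cosine sum via $b_m \equiv 0$ for $m \ge 2$,
\[
b_1 = \sum_{m \ge 1} m b_m(x') = \frac{\wt{u}_N(x',0^+) + \wt{u}_N(x',0^-)}{2} = \frac{u_N(x',0) + u_N(x',2\pi)}{2} = \frac{u_N(x',0)}{2},
\]
so that $u_N(x',0) = 2 b_1$ for every $x' \in \R^{N-1}$, a real constant.

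Plugging this constant into \eqref{eq per a_m 0} with $m = 1$, I obtain $\Delta' a_1 = 2 b_1/\pi$. Since $a_1$ is bounded on $\R^{N-1}$ (from \eqref{hp su u}) and bounded solutions of $\Delta' f = c$ on $\R^{N-1}$ exist only for $c = 0$ (the general solution is $\frac{c}{2(N-1)}|x'|^2$ plus a harmonic function, which is unbounded unless $c = 0$), we conclude $b_1 = 0$, hence $u_N(x',0) \equiv 0$. Condition \eqref{passo base 1} is now satisfied in full, and Proposition \ref{prop large N} yields $u(x',x_N) = 1-\cos x_N$. The delicate step is the endpoint convergence identity: one must carefully justify the Dirichlet-Jordan evaluation of $\wt{u}_N$ at $x_N = 0$ and correctly track the jump of the periodic extension across $x_N \in 2\pi\Z$; once this is in place, everything else reduces to tools already developed in the preceding lemmas.
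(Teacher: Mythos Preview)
Your proof is correct and follows the paper's argument almost step by step: deduce $b_{\bar m}\equiv 0$, then $u(\cdot,2\pi)\equiv 0$, then all $b_m$ constant or zero, then $u_N(\cdot,0)=2b_1$, and finally $b_1=0$ from $\Delta' a_1=2b_1/\pi$ with $a_1$ bounded, concluding via Proposition~\ref{prop large N}. The only difference is how the identity $u_N(x',0)=2b_1$ is obtained: the paper subtracts $b_1\sin x_N$ from $\tilde u$ to get an even periodic function and reads off the one-sided derivatives, whereas you apply the Dirichlet--Jordan theorem directly to the Fourier series of $\tilde u_N$ at the jump point; both routes are standard and yield the same conclusion.
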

\begin{proof}
By point ($iii$) of the previous Proposition we know that $b_{\bar m} \equiv 0$. \\
Hence, from \eqref{eq per b_m 0} for $\bar m$ we get 
\[
u(x',2\pi) \equiv 0 \qquad \text{in $\R^{N-1}$}.
\]
As a consequence
\[
-\D' b_m(x')+(m^2-1) b_m(x')=0 \qquad \forall m \ge 1,
\]
which implies through Lemma \ref{coefficienti nulli} that $b_m \equiv 0$ for every $m \ge 2$; also, $b_1$ turns out to be a bounded harmonic function on the whole $\R^{N-1}$, so that it has to be constant. Now we show that $b_1 =0$. Note that 
\[
\widetilde{u}(x',x_N)- b_1 \sin x_N=\frac{a_0(x')}{2}+ \sum_{m=0}^{+\infty} a_m(x') \cos(m x_N);
\]
hence, $w(x',x_N)=\widetilde{u}(x',x_N)-b_1 \sin x_N$ is an even $2\pi$-periodic function in the $x_N$ variable. Since we are assuming $u \in \mathcal{C}^2(\overline{\R}^N_+)$ and $u(x',2\pi) =0$, the function $w$ is continuous on the whole $\R$, and has continuous derivative with respect to $x_N$, except at most in $(x',0+2k\pi)$, with $k \in \Z$. However, the right and left derivatives in these points exist, and in particular
\[
w_N(x',2\pi^-)= u_N(x',2\pi^-)-b_1=-b_1.
\]
By periodicity and oddness of $w_N$ it results
\[
b_1=w_N(x',0^+)=u_N(x',0^+)-b_1=u_N(x',0)-b_1 \Rightarrow u_N(x',0)=2b_1.
\]
Note that $u_N(x',0)$ is constant. Now, plugging this expression in equation \eqref{eq per a_m 0} with $m=1$ we obtain
\[
\D' a_1(x')=\frac{2b_1}{\pi} \Rightarrow \D'\left(a_1(x')-\frac{b_1 x_1^2}{\pi}\right)=0:
\]
the function $a_1(x')- b_1 x_1^2/\pi$ is harmonic in the whole $\R^N$ and has at most algebraic growth with rate $2$ (since $a_1$ is bounded): therefore, the Liouville Theorem implies that 
\[
a_1(x')= \frac{b_1 x_1^2}{\pi} + P(x'),
\]
where $P$ is a harmonic polynomial. To sum up, $a_1$ is a bounded polynomial, thus it is constant, which in turns gives $\D' a_1 =0$, i.e. $b_1=0$ and finally $u_N(x',0)=0$. The thesis follows now from Proposition \ref{prop large N}.
\end{proof}

\section{More general operators}\label{section div}

In this section we generalize the approach adopted for the model problem to a more general family of elliptic equations (not necessarily uniformly elliptic) obtained by substituting the laplacian with a class of operators in divergence form. To be precise, let $A(x')$ be a $N \times N$ matrix of type
\beq\label{hp 1 su A}
A(x')= \left(\begin{array}{cc}
      \widehat{A}(x') & 0\\
      0 & 1
      \end{array}\right),
\eeq
where $\widehat{A}(x')$ is a $(N-1) \times (N-1)$ symmetric and real matrix with entries  $a_{ij} \in C^1\left(\R^{N-1}\right) \cap L^\infty\left(\R^{N-1}\right)$, such that
\beq\label{hp 2 su A}
\forall x' \in \R^{N-1}, \forall \xi \in \R^{N-1} \setminus \{0\}: \ \sum_{i,j=1}^{N-1} a_{ij}(x')\xi_i \xi_j > 0.
\eeq
Of course, if $N=2$ then $\widehat{A}(x')$ is a scalar positive function.\\
For the reader's convenience, we recall the following generalization of the classical Liouville theorem, see \cite{Fa}.
 
\begin{theorem}\label{teo Liouville div}
Let $q \geq 0$ and $B(x)=(b_{ij}(x))$ be a symmetric real matrix, whose entries are $L^{\infty}\left(\R^2\right)$ functions satisfying:
\[
\text{for a.e. } x' \in \R^2, \forall \xi \in \R^2 \setminus \{0\}: \ \sum_{i,j=1}^{2} b_{ij}(x')\xi_i \xi_j > 0.
\]
Let $v \in H^1_{loc}\left(\R^2\right)$ be a distribution solution of 
\[
\begin{cases}
-div\left(B(x')\nabla v\right)+q\left( \left\langle B(x')\nabla v, \nabla v\right\rangle  \right) \geq 0 & \text{in $\R^2$} \\
v(x') \geq -C & \text{a.e. in $\R^2$},
\end{cases}
\]
for some positive constant $C$. Then $v$ is a constant function.
\end{theorem}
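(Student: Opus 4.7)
The plan is to reduce the inequality with the quadratic gradient term to a pure superharmonicity statement via a Cole--Hopf type substitution, and then to invoke the two-dimensional Liouville theorem for bounded-below distributional supersolutions of operators of the form $-\text{div}(B\nabla\cdot)$ with $B$ merely pointwise positive-definite. The case $q=0$ is immediate from such a Liouville theorem (it is the theorem itself without the nonlinear term), so assume $q>0$.

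Set $\psi:=-e^{-qv}$. Since $v\ge -C$ a.e., $\psi$ takes values in $[-e^{qC},0)$ and is in particular bounded on $\R^2$. The map $t\mapsto -e^{-qt}$ is Lipschitz on $[-C,+\infty)$, so by the chain rule in Sobolev spaces $\psi\in H^1_{loc}(\R^2)$ with $\nabla\psi=qe^{-qv}\nabla v$. Given a nonnegative $\varphi\in C_c^\infty(\R^2)$, the function $\eta:=qe^{-qv}\varphi$ lies in $H^1_c(\R^2)\cap L^\infty(\R^2)$ and is a legitimate test function for the original distributional inequality. Substituting $\nabla\eta=qe^{-qv}\nabla\varphi - q^2e^{-qv}\varphi\,\nabla v$ into
$$
\int_{\R^2}\langle B\nabla v,\nabla\eta\rangle\,dx + q\int_{\R^2}\langle B\nabla v,\nabla v\rangle\,\eta\,dx\ge 0,
$$
the two terms quadratic in $\nabla v$ cancel exactly, leaving
$$
\int_{\R^2}\langle B\nabla\psi,\nabla\varphi\rangle\,dx = \int_{\R^2} qe^{-qv}\langle B\nabla v,\nabla\varphi\rangle\,dx\ge 0.
$$
Hence $\psi$ is a bounded distributional supersolution of $-\text{div}(B\nabla\psi)=0$ on the whole of $\R^2$. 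Applying the two-dimensional Liouville theorem for such operators (which is precisely the $q=0$ specialization of the present statement, proved in \cite{Fa} by a logarithmic cutoff argument that exploits only the $L^\infty$ bound and the pointwise positivity on $B$), $\psi$ is constant, and since $t\mapsto -e^{-qt}$ is injective this forces $v$ to be constant.

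The decisive algebraic obstacle is the exact cancellation in the test-function computation: it depends on the exponent $q$ in the substitution $\psi=-e^{-qv}$ matching the coefficient $q$ of the quadratic gradient term in the hypothesis, which is why the statement is sharp in that coefficient. A secondary subtlety, characteristic of Cole--Hopf arguments in a low-regularity setting, is justifying the chain rule for $\psi=f(v)$ with $v\in H^1_{loc}$: this is granted by the lower bound $v\ge -C$, which confines the range of $v$ to a half-line on which $f(t)=-e^{-qt}$ is globally Lipschitz. One should note, finally, that only the reduction step uses something specific to the quadratic gradient term; the underlying degenerate-elliptic Liouville result itself, taken as a black box from \cite{Fa}, is what forces the dimensional restriction $N=2$.
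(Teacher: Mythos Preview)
The paper does not actually prove Theorem~\ref{teo Liouville div}: it is stated ``for the reader's convenience'' and attributed to \cite{Fa}, with no argument given in the text. So there is no in-paper proof to compare against.

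Your argument is correct. The Cole--Hopf substitution $\psi=-e^{-qv}$ is precisely the device that linearizes the quadratic gradient term, and your bookkeeping of the weak formulation is accurate: the lower bound $v\ge -C$ makes $t\mapsto -e^{-qt}$ Lipschitz on the range of $v$, which legitimizes the chain rule in $H^1_{loc}$ and keeps $\eta=qe^{-qv}\varphi$ in the admissible test class; the cancellation of the two $\langle B\nabla v,\nabla v\rangle$ terms is exact. You then correctly reduce to the $q=0$ case, which is the genuine two-dimensional Liouville input from \cite{Fa}. This reduction is in fact the standard route to such statements and is almost certainly what \cite{Fa} does as well; in any event, nothing more is needed here since the paper itself treats the theorem as a black box.
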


\begin{remark}
It is not difficult to pass from the two dimensional case to the scalar case: it is sufficient to consider a solution of an ODE as a solution of the corresponding PDE. 
\end{remark}

These results enable us to try to use the arguments of section \ref{sezione pb modello} for the study of
\beq\label{pb div}
\begin{cases}
- div\left(A(x)\nabla u\right)= u-1& \text{in $\R^N_+$}\\
u=0 & \text{on $\pa \R^N_+$}.
\end{cases}
\eeq
Note that, due to the particular form of $A$ (cfr. equation \eqref{hp 1 su A}), it is reasonable to think that \eqref{pb div} inherits the structure of the model problem solved in the previous section. It is also immediate to check that the function $1-\cos{x_N}$ is, again, a nonnegative solution of \eqref{pb div} satisfying \eqref{hp su u}. We plan to prove that it is also unique in this class for $N=2$ and $3$. 

\begin{theorem}\label{teo pb div}
Let $N=2$ or $3$. Let $A(x')$ be a $N \times N$ matrix of type \eqref{hp 1 su A}, where $\widehat{A}(x')$ is a $(N-1) \times (N-1)$ symmetric and real matrix with entries  $a_{ij} \in C^1\left(\R^{N-1}\right) \cap L^\infty\left(\R^{N-1}\right)$, and such that \eqref{hp 2 su A} holds true. If $u \in \mathcal{C}^2(\overline{\R}^N_+)$ solves \eqref{pb div} and satisfies \eqref{hp su u}, then 
\[
u(x',x_N)=1-\cos x_N.
\]
\end{theorem}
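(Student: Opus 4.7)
The plan is to run the proof of Theorem \ref{teo pb modello} almost verbatim, with the tangential operator $\D'$ replaced by $div'(\widehat A(x')\nabla'\,\cdot\,)$. Because of the block-diagonal form \eqref{hp 1 su A}, the equation in \eqref{pb div} reads $-div'(\widehat A(x')\nabla' u) - u_{NN} = u-1$, which is the only structural difference from the model problem. First I would expand the $2\pi$-periodic extension of $x_N \mapsto u(x',x_N)$ in the Fourier series \eqref{espansione}; the same two integrations by parts in $x_N$ used in Lemma \ref{equazioni per i coefficienti di Fourier} yield, for every $m \geq 1$,
\begin{align*}
div'(\widehat A \nabla' a_m)(x') &= (m^2-1) a_m(x') + \tfrac{1}{\pi}\bigl(u_N(x',0) - u_N(x',2\pi)\bigr), \\
div'(\widehat A \nabla' b_m)(x') &= (m^2-1) b_m(x') + \tfrac{m}{\pi} u(x',2\pi),
\end{align*}
together with the analogous identity for $a_0$.

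Next I would repeat the bootstrap from Section \ref{sezione pb modello}. Taking $m=1$, the function $b_1$ is bounded by \eqref{hp su u} and satisfies $-div'(\widehat A \nabla' b_1) = -\tfrac{1}{\pi} u(x',2\pi) \leq 0$ in $\R^{N-1}$, so $-b_1$ is an $\widehat A$-superharmonic function bounded below; since $N-1\in\{1,2\}$, the divergence-form Liouville statement of Theorem \ref{teo Liouville div} (with $q=0$, plus the scalar case from the remark following it) forces $-b_1$ to be constant and hence $u(x',2\pi)\equiv 0$. As in Section \ref{sezione pb modello}, the minimum-point argument then gives $u_N(x',2\pi)\equiv 0$, and applying the same Liouville theorem to $-a_1$ (note $u_N(x',0)\geq 0$ since $u(x',0)=0$ and $u\geq 0$) yields $u_N(x',0)\equiv 0$.

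The Fourier equations thus collapse, for $m \geq 2$, to $-div'(\widehat A(x')\nabla' v) + (m^2-1) v = 0$ in $\R^{N-1}$ with $v=a_m$ or $v=b_m$, both bounded. The crux is a divergence-form version of Lemma \ref{coefficienti nulli}: testing against $v^+ \f_R^2$ with the same cutoff $\f_R$, and using the Cauchy-Schwarz inequality for the pointwise-positive bilinear form $\langle \widehat A \,\cdot\,,\cdot\,\rangle$ together with Young's inequality, one can absorb the gradient term into the coercive left-hand side and obtain
\[
\lambda \int_{B_R}(v^+)^2 \leq C\int_{B_{2R}}(v^+)^2 \,\langle \widehat A \nabla'\f_R, \nabla'\f_R\rangle \leq \frac{C\,\|\widehat A\|_{\infty}}{R^2}\int_{B_{2R}}(v^+)^2.
\]
Lemma \ref{lemma tecnico} then forces $v^+\equiv 0$, and the same argument applied to $-v$ gives $v \equiv 0$.

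Finally, the series reduces on $\overline\Sigma$ to $u(x',x_N) = \tfrac{a_0(x')}{2} + a_1 \cos x_N + b_1 \sin x_N$; the conditions $u(x',0)=0$ and $u_N(x',0)=0$ force $b_1=0$ and $a_0(x')\equiv -2a_1$, and substituting back into \eqref{pb div} pins down $a_0=2$, $a_1=-1$, giving $u(x',x_N)=1-\cos x_N$ on $\overline\Sigma$; the translation trick $v_1(x',x_N):=u(x',x_N+2\pi)$ then extends this identity to all of $\R^N_+$. The step requiring genuine care is the divergence-form analogue of Lemma \ref{coefficienti nulli}, because $\widehat A$ is only assumed pointwise positive and not uniformly elliptic: ordinary Cauchy-Schwarz on $|\nabla' v^+||\nabla' \f_R|$ cannot be closed against the $\widehat A$-coercive term, but the boundedness of $\widehat A$ together with the positivity of its quadratic form is exactly what is needed to close the weighted Caccioppoli estimate above.
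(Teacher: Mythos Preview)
Your proposal is correct and follows the paper's own proof essentially line by line: the paper likewise derives the divergence-form analogues of the Fourier-coefficient equations, invokes Theorem \ref{teo Liouville div} to kill the boundary terms via $b_1$ and $a_1$, proves the divergence-form version of Lemma \ref{coefficienti nulli} (stated as Lemma \ref{coefficienti nulli div}) using exactly the Cauchy--Schwarz inequality for the bilinear form $\langle \widehat A(x')\,\cdot\,,\cdot\,\rangle$ that you single out, and then concludes as in Theorem \ref{teo pb modello}. Your closing remark about why the $\widehat A$-weighted Cauchy--Schwarz is the right tool when uniform ellipticity is not assumed matches precisely the point the paper makes in the proof of Lemma \ref{coefficienti nulli div}.
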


\noindent We can follow the proof of Theorem \ref{teo pb modello}. Again, $\Sigma=\R^{N-1}\times (0,2\pi)$. For every $ x' \in \R^{N-1}$, we denote by $  \tilde u(x',\cdot)$ the $2\pi$-periodic extension of $ x_N \mapsto u(x',x_N)$. As for the model problem, from the smoothness of $u$ it follows that the Fourier expansion of $x_N \mapsto \tilde u(x',x_N)$ is convergent:
\[
\widetilde{u}(x',x_N)=\frac{a_0(x')}{2}+\sum_{m=1}^{+\infty} \left(a_m(x')\cos{(m x_N)} + b_m(x')\sin{(m x_N)}\right),
\]
where $a_m$ and $b_m$ are defined by \eqref{coefficienti}. 

With a slightly modification of the proof of Lemma \ref{equazioni per i coefficienti di Fourier}, we obtain

\begin{lemma} Let $N\ge2$. 
For any $m \geq 1$ we have
\begin{align}
& div'\left(\widehat{A}(x') \nabla' a_m(x')\right)=(m^2-1)a_m(x')+\frac{1}{\pi} \left(u_N(x',0)-u_N(x',2\pi) \right)\label{eq a_m 0 div}\\
& div'\left(\widehat{A}(x') \nabla' b_m(x')\right)=(m^2-1)b_m(x') + \frac{m}{\pi}u(x',2\pi). \label{eq b_m div}
\end{align}
Also,
\beq\label{eq a_0 div}
div'\left(\widehat{A}(x') \nabla' a_0(x')\right)= 2 -a_0(x')+\frac{1}{\pi} \left(u_N(x',0)-u_N(x',2\pi)\right).
\eeq
\end{lemma}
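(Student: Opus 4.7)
The plan is to imitate the proof of Lemma \ref{equazioni per i coefficienti di Fourier}, exploiting the special block form of $A$. The crucial observation is that, because $\widehat{A}$ depends only on $x'$ and the entries $A_{iN}$ and $A_{Nj}$ vanish for $i,j<N$ while $A_{NN}=1$, the operator splits as
\[
\mathrm{div}\,\bigl(A(x)\nabla u\bigr)=\mathrm{div}'\bigl(\widehat{A}(x')\nabla' u\bigr)+u_{NN}.
\]
Hence the equation in \eqref{pb div} rewrites as
\[
\mathrm{div}'\bigl(\widehat{A}(x')\nabla' u(x',x_N)\bigr)=-u(x',x_N)-u_{NN}(x',x_N)+1.
\]
This is exactly the same structural identity that drives the model-problem proof, with the laplacian $\Delta'$ replaced by $\mathrm{div}'(\widehat{A}(x')\nabla'\cdot)$.

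Next, I would differentiate the Fourier coefficients under the integral sign. For each fixed $m\geq 1$,
\[
\mathrm{div}'\bigl(\widehat{A}(x')\nabla' a_m(x')\bigr)=\frac{1}{\pi}\int_0^{2\pi}\mathrm{div}'\bigl(\widehat{A}(x')\nabla' u(x',x_N)\bigr)\cos(mx_N)\,dx_N,
\]
and similarly for $b_m$ and $a_0$. This exchange is justified by the $\mathcal{C}^2$-regularity of $u$, the $C^1\cap L^\infty$-regularity of the entries of $\widehat{A}$, together with the uniform bound on $u$ and its derivatives on the strip $\bar\Sigma$ guaranteed by \eqref{hp su u} and standard interior elliptic estimates. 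Once the operator is inside the integral, I substitute the above identity and obtain
\[
\mathrm{div}'\bigl(\widehat{A}\nabla' a_m\bigr)=-\frac{1}{\pi}\int_0^{2\pi}\bigl(u+u_{NN}\bigr)\cos(mx_N)\,dx_N+\frac{1}{\pi}\int_0^{2\pi}\cos(mx_N)\,dx_N,
\]
the last integral being $0$ for $m\geq 1$ and $2\pi$ for $m=0$.

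Then, exactly as in Lemma \ref{equazioni per i coefficienti di Fourier}, I integrate the $u_{NN}$ term by parts twice in $x_N$, using $\sin(0)=\sin(2\pi m)=0$ and $\cos(0)=\cos(2\pi m)=1$, to convert it into $m^2 a_m(x')$ plus the boundary contribution $-\tfrac{1}{\pi}(u_N(x',2\pi)-u_N(x',0))$. This yields \eqref{eq a_m 0 div}. The same procedure, with the roles of $\sin$ and $\cos$ swapped, produces the boundary term $\tfrac{m}{\pi}\,[u\cos(mx_N)]_0^{2\pi}=\tfrac{m}{\pi}u(x',2\pi)$ (since $u(x',0)=0$) and gives \eqref{eq b_m div}. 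For the $m=0$ case no integration by parts in the $u$-integral is needed, and one immediately obtains \eqref{eq a_0 div}.

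There is no substantial obstacle here: the only subtlety compared with the laplacian case is the decomposition $\mathrm{div}(A\nabla u)=\mathrm{div}'(\widehat{A}\nabla' u)+u_{NN}$, which relies on the special form \eqref{hp 1 su A} (block-diagonal, $\widehat{A}$ independent of $x_N$, and lower-right entry constant). Once that decomposition is established, the rest is a routine reprise of the computations in Lemma \ref{equazioni per i coefficienti di Fourier}, with the laplacian $\Delta'$ consistently replaced by $\mathrm{div}'(\widehat{A}\nabla'\cdot)$.
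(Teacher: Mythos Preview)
Your proposal is correct and follows essentially the same route as the paper: both arguments hinge on the block-diagonal decomposition $\mathrm{div}\bigl(A\nabla u\bigr)=\mathrm{div}'\bigl(\widehat{A}(x')\nabla' u\bigr)+u_{NN}$ (coming from \eqref{hp 1 su A}), after which the computation is a verbatim repetition of Lemma \ref{equazioni per i coefficienti di Fourier} with $\Delta'$ replaced by $\mathrm{div}'(\widehat{A}\nabla'\cdot)$. The paper is slightly terser about justifying the interchange of the operator with the integral, but otherwise the steps coincide.
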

\begin{proof}
For any $m \geq 1$ we have
\beq\label{eq2}
div'\left(\widehat{A}(x') \nabla' a_m(x')\right)  = \frac{1}{\pi} \int_0^{2\pi} div'\left(\widehat{A}(x') \nabla 'u(x',x_N)\right) \cos{(m x_N)}\,dx_N .
\eeq
Since $a_{iN}=a_{Nj}\equiv 0$ for any $i,j \neq N$, we have
\beq\label{eq3}
\begin{split}
div'\left(\widehat{A}(x')\nabla' u(x',x_N)\right) &= \sum_{i=1}^{N-1} \pa_i \left(\sum_{j=1}^{N-1} a_{ij}(x') u_j(x',x_N)\right)\\
&=\sum_{i=1}^N \pa_i \left(\sum_{j=1}^N a_{ij}(x') u_j(x',x_N)\right)-u_{NN}(x',x_N)\\
&= 1-u(x',x_N)-u_{NN}(x',x_N).
\end{split}
\eeq
Hence equation \eqref{eq2} becomes
\[
div'\left(\widehat{A}(x')\nabla' a_m(x')\right)  = \frac{1}{\pi} \int_0^{2\pi} \left(1-u(x',x_N)-u_{NN}(x',x_N)\right)\cos{( m x_N)}\,dx_N.
\]
Now, as usual, we can integrate by parts twice the last term and pass to
\[
div'\left(\widehat{A}(x')\nabla' a_m(x')\right)  = (m^2-1)a_m(x')+ \frac{1}{\pi}\left(u_N(x',0)-u_N(x',2\pi)\right),
\]
which is \eqref{eq a_m 0 div}. The same procedure gives \eqref{eq b_m div} and \eqref{eq a_0 div}.
\end{proof}

\begin{lemma}\label{lemma on a_1 and b_1 div}
Both $b_1$ and $a_1$ are constant. Moreover, 
\[
u(x',2\pi)=0, \quad u_N(x',2\pi)=0 \quad \text{and} \quad u_N(x',0)=0 \quad \forall x' \in \R^{N-1}.
\]
\end{lemma}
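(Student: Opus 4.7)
The plan is to run the same three-step argument used in the proof of the corresponding lemma for the laplacian case, with the sole difference that the classical Liouville theorem for bounded subharmonic functions in $\R$ or $\R^2$ is replaced by the divergence-form Liouville Theorem \ref{teo Liouville div} (the scalar case $N=2$ being handled by the remark that follows it). The dimension restriction $N=2$ or $3$ enters precisely because $\widehat{A}(x')$ lives on $\R^{N-1}$ and Theorem \ref{teo Liouville div} is stated in dimension at most $2$.

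First, I would set $m=1$ in equation \eqref{eq b_m div} to obtain
\[
-\,div'\!\left(\widehat{A}(x')\nabla' b_1(x')\right) = -\frac{1}{\pi}u(x',2\pi) \leq 0
\]
in $\R^{N-1}$, where the sign uses $u \geq 0$. Since $u$ is bounded in the strip $\bar\Sigma$ by \eqref{hp su u}, the Fourier coefficient $b_1$ is bounded on $\R^{N-1}$, in particular bounded from below. Theorem \ref{teo Liouville div} (applied with $q=0$ and $B=\widehat{A}$, using the remark to cover $N-1=1$) forces $b_1$ to be constant, whence $div'(\widehat{A}\nabla' b_1)\equiv 0$ and therefore $u(x',2\pi)\equiv 0$ on $\R^{N-1}$.

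Second, since $u\ge 0$ throughout $\R^N_+$ and vanishes on the hyperplane $\{x_N=2\pi\}$, every point $(x',2\pi)$ is a point of minimum for $u$ in $\R^N_+$, so $u_N(x',2\pi)=0$ for all $x'$. Third, setting $m=1$ in equation \eqref{eq a_m 0 div} and using the just obtained $u_N(x',2\pi)\equiv 0$ gives
\[
-\,div'\!\left(\widehat{A}(x')\nabla' a_1(x')\right) = -\frac{1}{\pi}u_N(x',0).
\]
Since $u(x',0)=0$ and $u\ge 0$ in $\R^N_+$, the boundary hyperplane $\{x_N=0\}$ is a zero-minimum surface and hence $u_N(x',0)\ge 0$ on $\R^{N-1}$; thus $a_1$ is a bounded (by \eqref{hp su u}) subsolution of a uniformly-sign-definite divergence-form operator, and Theorem \ref{teo Liouville div} again forces $a_1$ to be constant, yielding $u_N(x',0)\equiv 0$.

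There is no real obstacle here beyond verifying that the hypotheses of Theorem \ref{teo Liouville div} are met: $a_1, b_1$ are $C^2$, hence in $H^1_{loc}$; they are bounded thanks to \eqref{hp su u} applied on $[0,2\pi]$; and $\widehat{A}(x')$ satisfies the pointwise positivity required by \eqref{hp 2 su A}, which matches the hypothesis of the theorem (note that uniform ellipticity is not needed). The one subtlety worth flagging is the case $N=2$, where $\widehat{A}$ is scalar and lives on $\R$: this is handled by viewing $a_1, b_1$ as functions on $\R^2$ independent of the second variable and applying Theorem \ref{teo Liouville div} to this trivial extension, as indicated in the remark following it.
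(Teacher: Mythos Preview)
Your proof is correct and follows the paper's own argument essentially step for step: first apply Theorem \ref{teo Liouville div} to the bounded function $b_1$ (using $u\ge 0$) to get $b_1$ constant and $u(x',2\pi)=0$, then observe $u_N(x',2\pi)=0$ at the interior minimum, and finally repeat the Liouville argument for $a_1$ using $u_N(x',0)\ge 0$. The only cosmetic point is that Theorem \ref{teo Liouville div} is stated for $-div'(B\nabla' v)\ge 0$ with $v$ bounded below, so strictly one applies it to $-b_1$ (resp.\ $-a_1$); since these coefficients are bounded both above and below this is harmless.
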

\begin{proof}
In light of the previous Lemma, we have
\[
div'\left(\widehat{A}(x')\nabla' b_1(x')\right)= \frac{1}{\pi}u(x',2\pi) \geq 0;
\]
the function $b_1$ is bounded (since $u$ satisfies \eqref{hp su u}), and since $N=2$ or $3$ we are in position to apply Theorem \ref{teo Liouville div}:
\[
b_1 = const. \Rightarrow u(x',2\pi)=\pi div'\left(\widehat{A}(x')\nabla' b_1(x')\right) =0.
\]
Note that now $u_N(x',2\pi)=0$, since $(x',2\pi)$ is a point of minimum of $u$ for every $x' \in \R^{N-1}$. Therefore, equation \eqref{eq a_m 0 div} becomes
\[
div'\left(\widehat{A}(x')\nabla' a_1(x')\right)=\frac{1}{\pi}u_N(x',0) \geq 0;
\]
this means that $a_1$ satisfies the assumptions of Theorem \ref{teo Liouville div}: 
\[
a_1= const. \Rightarrow u_N(x',0)= \pi div'\left(\widehat{A}(x')\nabla' a_1(x')\right) = 0.    \qedhere
\]
\end{proof}

As a consequence, the equations for $a_m$ and $b_m$ simplify: \begin{align}
&div'\left(\widehat{A}\nabla' b_m(x')\right) = (m^2-1) b_m(x') \qquad \forall m \geq 2 \label{eq per b_m} \\
&div'\left(\widehat{A}\nabla' a_m(x')\right) =(m^2-1)a_m(x') \qquad \forall m \geq 2. \label{eq for a_m}
\end{align}
In this way, we proved that for any $m \geq 2$ both the coefficients $a_m$ and $b_m$ are bounded solution of an equation of type
\beq\label{eq per coefficienti div}
-div'\left(\widehat{A}(x') \nabla' v(x')\right)+\lambda v(x')=0 \qquad \text{in $\R^{N-1}$},
\eeq
with $\lambda>0$. In analogy with the model problem, we state the following result.

\begin{lemma}\label{coefficienti nulli div}
Assume $N\ge 2$ and let $v \in \mathcal{C}^2\left(\R^{N-1}\right)$ a subsolution of
\[
-div'\left(\widehat{A}(x') \nabla' v(x')\right)+c(x') v(x') \le 0 \qquad \text{in $\R^{N-1}$}
\]
with $c(x') \geq \lambda >0$. Here $\widehat{A}(x')$ is an $(N-1) \times (N-1)$ matrix with entries $a_{ij}$ in $L^{\infty}(\R^{N-1})$ and such that \eqref{hp 2 su A} holds true. \\
If $v^+$ has at most algebraic growth at infinity, then $v\le 0$.
\end{lemma}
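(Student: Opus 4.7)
The plan is to imitate the proof of Lemma \ref{coefficienti nulli}, replacing the Euclidean inner product throughout by the quadratic form naturally associated with $\widehat{A}$. The point to check is that the pointwise positivity of $\widehat{A}$, together with the $L^\infty$ bound on the coefficients $a_{ij}$, is enough to push the argument through; uniform ellipticity is never used.

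Concretely, I would keep the same cutoff function $\varphi_R(x') := \varphi(|x'|/R)$ as in the proof of Lemma \ref{coefficienti nulli}, so that $|\nabla'\varphi_R| \le \frac{C}{R}\chi_{B_{2R}}$. Testing the differential inequality against $v^+ \varphi_R^2$ and integrating by parts yields
\[
\int_{\R^{N-1}} \langle \widehat{A}\nabla' v^+, \nabla' v^+\rangle \varphi_R^2 + \lambda \int_{\R^{N-1}} (v^+)^2 \varphi_R^2 \le -2\int_{\R^{N-1}} v^+ \varphi_R \, \langle \widehat{A}\nabla' v^+, \nabla' \varphi_R\rangle,
\]
using $c(x')\ge\lambda>0$ on the left. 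On the right, the Cauchy--Schwarz inequality applied to the positive semidefinite symmetric form $\widehat{A}(x')$ gives
\[
\bigl|\langle \widehat{A}\nabla' v^+,\nabla'\varphi_R\rangle\bigr| \le \langle \widehat{A}\nabla' v^+,\nabla' v^+\rangle^{1/2}\,\langle \widehat{A}\nabla'\varphi_R,\nabla'\varphi_R\rangle^{1/2},
\]
and then Young's inequality with a small $\varepsilon$ lets me absorb the $\widehat{A}$-gradient term of $v^+$ into the left-hand side. The outcome is an estimate of the form
\[
\lambda \int_{\R^{N-1}} (v^+)^2 \varphi_R^2 \le C_\varepsilon \int_{\R^{N-1}} (v^+)^2 \,\langle \widehat{A}\nabla'\varphi_R,\nabla'\varphi_R\rangle.
\]

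Using $a_{ij}\in L^\infty(\R^{N-1})$ I bound $\langle \widehat{A}\nabla'\varphi_R,\nabla'\varphi_R\rangle \le \|\widehat{A}\|_\infty |\nabla'\varphi_R|^2 \le \frac{C}{R^2}\chi_{B_{2R}}$, which gives
\[
\int_{B_R} (v^+)^2 \le \frac{C}{\lambda R^2}\int_{B_{2R}} (v^+)^2.
\]
Since $v^+$ has at most algebraic growth, $I(R):=\int_{B_R}(v^+)^2 \le C' R^{N-1+2k}$ for some $k\ge 0$, and Lemma \ref{lemma tecnico} (with $\vartheta = C/(\lambda R_0^2)$ chosen small enough) forces $I(R)\equiv 0$, hence $v^+\equiv 0$, i.e.\ $v\le 0$.

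The only mildly delicate point is the absorption step: without uniform ellipticity one cannot bound $\int \varphi_R^2 |\nabla' v^+|^2$ from below by $\int \varphi_R^2 \langle \widehat{A}\nabla' v^+,\nabla' v^+\rangle$, so one must carry the $\widehat{A}$-quadratic form consistently throughout and exploit its Cauchy--Schwarz inequality rather than the Euclidean one. Everything else is cosmetic: the cutoff argument, the Young inequality, and the iteration via Lemma \ref{lemma tecnico} transfer verbatim from the proof of Lemma \ref{coefficienti nulli}.
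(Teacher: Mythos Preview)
Your proof is correct and follows essentially the same route as the paper: test with $v^+\varphi_R^2$, use the Cauchy--Schwarz inequality for the bilinear form $\langle \widehat{A}\,\cdot\,,\,\cdot\,\rangle$, absorb via Young, bound $\langle \widehat{A}\nabla'\varphi_R,\nabla'\varphi_R\rangle$ using $a_{ij}\in L^\infty$ and the cutoff estimate, and conclude with Lemma~\ref{lemma tecnico}. Your remark that the $\widehat{A}$-quadratic form must be carried consistently (rather than passing to the Euclidean one) is exactly the point the paper makes.
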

\begin{proof}
For any $R>0$, let $\f_R$ be as in the proof of Lemma \ref{coefficienti nulli}. Recall the \eqref{magg su grad phi}:
\[
\left| \nabla ' \f_R(x') \right| \leq \frac{C}{R} \chi_{B_{2R}}(x').
\] 
Let us test equation \eqref{eq per coefficienti div} with $v^+ \f_R^2$:
\begin{multline}\label{eq4}
\int_{\R^{N-1}} \left(\langle \widehat{A}(x') \nabla' v^+, \nabla' v^+\rangle +c(x') \left(v^+\right)^2\right)\f_R^2 \\
= -2\int_{\R^{N-1}} v^+\f_R \langle \widehat{A}(x')\nabla' v, \nabla' \f_R\rangle .
\end{multline}
Under our assumptions on $\widehat{A}$, for almost every $x' \in \R^{N-1}$ the function
\[
(\xi_1,\xi_2) \in \R^{2(N-1)} \mapsto \langle \widehat{A}(x')\xi_1,\xi_2\rangle \in \R
\]
defines a bilinear symmetric positive definite form, so that in particular the Cauchy-Schwarz inequality holds true. Hence, using also the Young inequality, we can control the right hand side: for any $\varepsilon>0$ there exists $C_\varepsilon>0$ such that
\begin{align*}
-2 &\int_{\R^{N-1}} v^+  \f_R \langle \widehat{A}(x')\nabla' v^+, \nabla' \f_R \rangle  \leq 2\int_{\R^{N-1}} v^+ \f_R \left|\langle \widehat{A}(x')\nabla' v^+, \nabla' \f_R \rangle \right| \\
& \leq 2 \int_{\R^{N-1}} v^+ \f_R \sqrt{\langle \widehat{A}(x') \nabla' v^+, \nabla' v^+\rangle }
\sqrt{\langle \widehat{A}(x') \nabla' \f_R, \nabla' \f_R\rangle } \\
&\leq 2\varepsilon \int_{\R^{N-1}} \f_R^2 \langle \widehat{A}(x') \nabla' v^+, \nabla' v^+\rangle + 2C_\varepsilon \int_{\R^{N-1}} \left(v^+\right)^2 \langle \widehat{A}(x') \nabla' \f_R, \nabla' \f_R \rangle.
\end{align*}
Coming back to equation \eqref{eq4}, using also the fact that $c(x') \geq \lambda$, we find
\begin{multline*}
\int_{\R^{N-1}} \left(\left(1-2\varepsilon\right)\langle \widehat{A}(x') \nabla' v^+, \nabla' v^+\rangle+\lambda \left(v^+\right)^2\right)\f_R^2 \\
\leq  2C_\varepsilon \int_{\R^{N-1}} \left(v^+\right)^2 \langle \widehat{A}(x') \nabla' \f_R, \nabla' \f_R\rangle.
\end{multline*}
Choosing $\varepsilon <1/2$, using the assumptions of $\widehat{A}$ and the estimate \eqref{magg su grad phi}, we deduce
\[
\int_{B_R} \left(v^+\right)^2 \leq \frac{C}{\lambda R^2}\int_{B_{2R}} \left(v^+\right)^2.
\]
Also, since $v^+$ has at most algebraic growth, we have
\[
\int_{B_R} \left(v^+\right)^2 \le C' R^{N+2k}.
\]
We can apply Lemma \ref{lemma tecnico} again, to find
\[
\int_{B_R} \left(v^+\right)^2=0 \qquad \forall R>R_0 \text{ sufficiently large},
\]
which implies $v^+ \equiv 0$.
\end{proof}

\begin{proof}[Conclusion of the proof of Theorem \ref{teo pb div}]
The previous Lemma implies that \\$a_m \equiv 0$ and $b_m \equiv 0$ for every $m \geq 2$. Therefore, a solution $u$ of \eqref{pb div} which satisfies \eqref{hp su u} has the following  expansion in $\bar \Sigma$:
\[
u(x',x_N)=\frac{a_0(x')}{2}+a_1 \cos{x_N} + b_1 \sin{x_N},
\]
which is exactly \eqref{forma di u}. Moreover, we showed that
\[
u(x',0)=0 \quad \text{and} \quad u_N(x',0)=0 \quad \forall x' \in \R^{N-1},
\]
hence we can repeat step by step the conclusion of the proof of Theorem \ref{teo pb modello}.
\end{proof}

\section{More general problems}\label{section general}

In this section we apply the previous method to study and classify solutions of
\[
\begin{cases}
-div\left(A(x')\nabla u\right)=u-g(x',x_N) & \text{in $\R^N_+$}\\
u=0 & \text{on $\pa \R^N_+$},
\end{cases}
\]
satisfying \eqref{hp su u}, when $N=2$ or $3$. The situation here is much more involved than the one in the previous sections. Indeed, we have to face the occurrence of various phenomena such as: non-existence of solutions and/or the existence and the multiplicity of solutions. Moreover, a solution might not be a function of the $x_N$ variable only (in fact, if $g$ depends on $x'$ such a result cannot be expected). The results that we shall prove, will strongly depend on the form of the function $g$.

In what follows, we will always assume that the matrix $A$ satisfies the assumptions already imposed in the previous section. Therefore, we do not write explicitly these assumptions anymore. Since we are interested in classical solutions, we assume that $g \in \mathcal{C}(\overline{\R}_+^{N})$. We can consider the $2\pi$-periodic extension of $x_N \in (0,2\pi) \mapsto g(x',x_N)$: inside $\Sigma$ we have the expansion
\[
\widetilde{g}(x',x_N)= \frac{c_0(x')}{2}+\sum_{m=1}^\infty \left(c_m(x') \cos{(m x_N)} + d_m(x') \sin{(m x_N)}\right),
\]
where
\beq\label{coeff of g}
\begin{split}
& c_m(x'):= \frac{1}{\pi} \int_0^{2\pi}  g(x',x_N)\cos{(m x_N)} \, dx_N \qquad \forall m \geq 0 \\
& d_m(x'):= \frac{1}{\pi} \int_{0}^{2\pi}  g(x',x_N)\sin{(m x_N)} \, dx_N \qquad \forall m \geq 1.
\end{split}
\eeq
Let us define again $a_m$ and $b_m$ by \eqref{coefficienti}; these are the formal Fourier coefficients of $u$ with respect to the $x_N$-variable in $\Sigma$. We start writing down the equations satisfied by $a_m$ and $b_m$.

\begin{lemma}
For any $m \geq 0$ it results
\beq\label{eq a_m completo}
div'\left(\widehat{A}(x') \nabla' a_m(x')\right) = \left(m^2-1\right) a_m(x') + c_m(x')+\frac{1}{\pi}\left(u_N(x',0)-u_N(x',2\pi)\right);
\eeq
For any $m \geq 1$ it results
\beq\label{eq b_m completo}
div'\left(\widehat{A}(x') \nabla' b_m(x')\right) = \left(m^2-1\right) b_m(x') + d_m(x')+\frac{m}{\pi} u(x',2\pi).
\eeq
\end{lemma}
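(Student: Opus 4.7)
The plan is to imitate the proof of the analogous lemma from section \ref{section div}, adapting it to the inhomogeneous term $g(x',x_N)$. The single new feature is that the right-hand side of the PDE now contributes the Fourier coefficients $c_m(x')$ and $d_m(x')$; everything else (integration by parts in $x_N$, exploiting the block form of $A$) carries over verbatim.

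First I would apply $\operatorname{div}'(\widehat{A}(x')\nabla'\,\cdot\,)$ to the definitions \eqref{coefficienti} of $a_m$ and $b_m$, commuting the operator with the integral (which is legitimate by the regularity of $u$ and of the entries of $\widehat{A}$). Then, using the block structure \eqref{hp 1 su A} exactly as in display \eqref{eq3}, I would rewrite
\[
\operatorname{div}'\!\bigl(\widehat{A}(x')\nabla' u(x',x_N)\bigr)=\operatorname{div}\!\bigl(A(x)\nabla u(x',x_N)\bigr)-u_{NN}(x',x_N)=g(x',x_N)-u(x',x_N)-u_{NN}(x',x_N),
\]
where in the last equality I have used the PDE satisfied by $u$. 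Multiplying by $\cos(mx_N)$ (respectively $\sin(mx_N)$) and integrating over $[0,2\pi]$ then separates the contributions from $g$, from $u$, and from $u_{NN}$; the first two immediately produce $\pi c_m(x')-\pi a_m(x')$ (resp.\ $\pi d_m(x')-\pi b_m(x')$).

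Next I would handle the $u_{NN}$ term by integrating by parts twice in $x_N$. For the cosine case the first integration yields the boundary contribution $-[u_N(x',\cdot)\cos(m\cdot)]_0^{2\pi}=-\bigl(u_N(x',2\pi)-u_N(x',0)\bigr)$ (since $\cos(0)=\cos(2\pi m)=1$) and an interior term $-m\int_0^{2\pi}u_N\sin(mx_N)\,dx_N$; a second integration by parts on the latter kills the boundary term (because $\sin$ vanishes at $0$ and $2\pi$) and produces $m^2\pi a_m(x')$. Collecting everything and dividing by $\pi$ gives exactly \eqref{eq a_m completo}. For the sine case, the roles are reversed: the first integration by parts has a vanishing boundary term, while the second contributes $m\bigl(u(x',2\pi)-u(x',0)\bigr)=m\,u(x',2\pi)$ (using the boundary condition $u(x',0)=0$) plus $m^2\pi b_m(x')$, yielding \eqref{eq b_m completo}. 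The case $m=0$ of \eqref{eq a_m completo} is slightly degenerate but follows from the same calculation with $\cos(0\cdot x_N)\equiv 1$, the second integration by parts being unnecessary.

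There is no real obstacle: the computation is essentially bookkeeping, and the only place requiring any attention is keeping track of signs and of which boundary terms survive at $x_N=0$ and $x_N=2\pi$. Compared with the proof of the lemma just before the statement, the sole modification is replacing the constant $1$ by $g(x',x_N)$ in the equation, which inserts the Fourier coefficients $c_m(x')$ and $d_m(x')$ into the right-hand sides in place of the contributions $2$ (for $m=0$) and $0$ (for $m\ge 1$) that appeared in the model case.
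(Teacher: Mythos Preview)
Your proposal is correct and follows essentially the same approach as the paper: commute the operator $\operatorname{div}'(\widehat A\nabla'\cdot)$ with the integral, use the block structure \eqref{hp 1 su A} and the PDE to replace $\operatorname{div}'(\widehat A\nabla' u)$ by $g-u-u_{NN}$, and then integrate the $u_{NN}$ term by parts twice exactly as in Lemma~\ref{equazioni per i coefficienti di Fourier}. The paper's own proof is simply a more compressed version of what you wrote, pointing out that the only change from the earlier computation is that $\frac{1}{\pi}\int_0^{2\pi}g\cos(mx_N)\,dx_N=c_m(x')$ (and analogously for $d_m$) rather than the constants coming from $g\equiv 1$.
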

\begin{proof}
For any $m \geq 1$:
\begin{multline*}
div'\left(\widehat{A}(x') \nabla' a_m(x')\right) = \frac{1}{\pi} \int_0^{2\pi} div'\left(\widehat{A}(x')\nabla' u(x',x_N)\right) \cos{(m x_N)}\,dx_N\\
= \frac{1}{\pi} \int_0^{2\pi}\left(g(x',x_N)-u(x',x_N)-u_{NN}(x',x_N)\right)\cos{(m x_N)}\,dx_N.
\end{multline*}
Now we can go on with the same computations already developed in Lemma \ref{equazioni per i coefficienti di Fourier}, with the only difference that 
\[
\frac{1}{\pi} \int_0^{2\pi} \cos{(m x_N)} \,dx_N=0 \quad \text{while} \quad \frac{1}{\pi} \int_0^{2\pi} g(x',x_N)\cos{(m x_N)} \, dx_N= c_m(x').
\]
In the end, we obtain
\[
div'\left(\widehat{A}(x') \nabla' a_m(x')\right) = \left(m^2-1\right) a_m(x') + c_m(x')+\frac{1}{\pi}\left(u_N(x',0)-u_N(x',2\pi\right).
\]
The same procedure gives the equations for $b_m$ and for $a_0$.
\end{proof} 
For a quite general $g$ the study of these equations does not give a complete classification for the possible solutions of \eqref{pb completo}. However, in some particular cases we can obtain sharp results. This will be the object of the following subsections.

\subsection{Inhomogeneous terms independent of $x_N$}

The first generalization concerns a constant $g$. It is straightforward to adapt the arguments of the previous sections, obtaining the following result.
 
\begin{theorem}
Let $N=2$ or $3$. If $g(x',x_N)=\theta \in \R$, one of the following alternatives occurs:
\begin{itemize}
\item[($i$)] if $\theta \ge 0$ there exists a unique solution of \eqref{pb completo} satisfying \eqref{hp su u}. This solution is given by 
\[
u(x',x_N)= \theta(1-\cos x_N).
\]
\item[($ii$)] if $\theta <0$, problem \eqref{pb completo} does not admit any solution satisfying \eqref{hp su u}.
\end{itemize}
\end{theorem}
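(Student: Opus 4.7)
The plan is to re-run, essentially verbatim, the Fourier-series argument used in the proof of Theorem \ref{teo pb div}, with the constant $1$ on the right-hand side replaced by $\theta$. Since $g(x',x_N)\equiv\theta$ is constant, its Fourier coefficients in $x_N$ on the strip $\bar\Sigma=\R^{N-1}\times[0,2\pi]$ are $c_0(x')=2\theta$ and $c_m\equiv 0$, $d_m\equiv 0$ for $m\ge 1$. Therefore equations \eqref{eq a_m completo} and \eqref{eq b_m completo} for $m\ge 1$ reduce to exactly \eqref{eq a_m 0 div} and \eqref{eq b_m div} respectively — the only equation that changes is the one for $a_0$, in which the source $2$ is replaced by $2\theta$.

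For case ($i$), suppose $u$ is a solution satisfying \eqref{hp su u}. Taking the $2\pi$-periodic extension of $u$ in $x_N$ and expanding in Fourier series, I would first apply Lemma \ref{lemma on a_1 and b_1 div}, whose proof uses only the sign $u\ge 0$ and the $L^\infty$-bounds on strips together with Theorem \ref{teo Liouville div} (it does not depend on the value of $\theta$): this yields that $a_1,b_1$ are constants and
\[
u(x',2\pi)=0,\quad u_N(x',0)=0,\quad u_N(x',2\pi)=0\qquad\forall x'\in\R^{N-1}.
\]
Consequently, for every $m\ge 2$, $a_m$ and $b_m$ solve $-\text{div}'(\widehat A\nabla' v)+(m^2-1)v=0$ in $\R^{N-1}$ and are bounded, so Lemma \ref{coefficienti nulli div} gives $a_m\equiv 0\equiv b_m$. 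Hence in $\bar\Sigma$
\[
u(x',x_N)=\frac{a_0(x')}{2}+a_1\cos x_N+b_1\sin x_N.
\]
The boundary conditions $u(x',0)=0$ and $u_N(x',0)=0$ force $b_1=0$ and $a_0\equiv -2a_1$ (in particular $a_0$ is constant). Plugging the resulting expression $u=a_1(\cos x_N-1)$ into the PDE and matching constants yields $a_1=-\theta$, so $u(x',x_N)=\theta(1-\cos x_N)$ in $\bar\Sigma$. I would then extend to all of $\R^N_+$ by the translation trick $v_k(x',x_N):=u(x',x_N+2k\pi)$, checking as in the proof of Theorem \ref{teo pb modello} that each $v_k$ is still a nonnegative solution satisfying \eqref{hp su u}, and iterating. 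Existence in ($i$) is trivial: for $\theta\ge 0$, the function $\theta(1-\cos x_N)$ is nonnegative, of class $\mathcal C^2$, and visibly solves the problem.

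For case ($ii$), I would argue by contradiction. If a solution $u$ satisfying \eqref{hp su u} existed for some $\theta<0$, the whole chain of deductions above would go through unchanged (they use only $u\ge 0$, the $L^\infty$-bounds on strips, and the structural form of the equation), forcing $u(x',x_N)=\theta(1-\cos x_N)$ in $\bar\Sigma$. But since $\theta<0$ and $1-\cos x_N>0$ on $(0,2\pi)$, this function is strictly negative on the interior of the strip, contradicting the nonnegativity assumption in \eqref{hp su u}. Hence no such solution exists.

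There is no genuine obstacle here: the sign $\theta\ge 0$ is needed only at the very last step, to make the explicit candidate compatible with nonnegativity. The only points requiring mild care are (a) verifying that the proof of Lemma \ref{lemma on a_1 and b_1 div} indeed uses $u\ge 0$ but not the specific value of the source, so it transfers unchanged, and (b) confirming that the translation argument used for the iterative extension to $\R^N_+$ preserves both \eqref{hp su u} and the equation, which is immediate because the equation has constant source and $u_N(x',2k\pi)=0$ at each step (since $x_N=2k\pi$ is a minimum of the already-identified expression).
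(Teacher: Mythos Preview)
Your proposal is correct and follows precisely the approach the paper indicates: the paper's own proof consists of a single line (``It is straightforward to adapt the arguments of the previous sections''), and your write-up is a faithful and accurate execution of that adaptation, including the correct observation that Lemma~\ref{lemma on a_1 and b_1 div} is insensitive to the value of~$\theta$ and that the sign of~$\theta$ enters only at the final step when checking nonnegativity of the candidate $\theta(1-\cos x_N)$.
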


\bigskip

The next step in the study is to treat the case $g=g(x')$. If we are interested in solutions satisfying \eqref{hp su u} and $g$ is not constant, 
we can show that we do not have such a kind of solution at all. 

\begin{theorem}
Let $N=2$ or $3$, let $g=g(x') \in \mathcal{C}(\R^{N-1})$. If $g$ is not constant, problem \eqref{pb completo} does not admit any solution satisfying \eqref{hp su u}.\\
Equivalently, if there exists $u \in \mathcal{C}^2(\overline{R}_+^N)$ which solves  \eqref{pb completo} and satisfies \eqref{hp su u}, then $g$ is constant.
\end{theorem}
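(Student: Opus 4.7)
The plan is to mimic the Fourier-series strategy already developed for Theorem \ref{teo pb div}, with only the $m=0$ coefficient receiving a genuinely new contribution. I would argue by contradiction: assume $u\in \mathcal{C}^2(\overline{\R}^N_+)$ solves \eqref{pb completo} with $g=g(x')$, satisfies \eqref{hp su u}, and that $g$ is not constant. Since $g$ is independent of $x_N$, its Fourier coefficients \eqref{coeff of g} in the $x_N$ variable are essentially trivial: $c_m(x')\equiv 0$ and $d_m(x')\equiv 0$ for every $m \ge 1$, while $c_0(x')=2g(x')$.

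The first step is to observe that for every $m \ge 1$ the equations \eqref{eq a_m completo} and \eqref{eq b_m completo} collapse exactly to \eqref{eq a_m 0 div} and \eqref{eq b_m div} of Section \ref{section div}. Consequently, Lemma \ref{lemma on a_1 and b_1 div} applies \emph{verbatim}, and yields that both $a_1$ and $b_1$ are constants, together with
\[
u(x',2\pi)=0,\quad u_N(x',0)=0,\quad u_N(x',2\pi)=0\qquad\forall x' \in \R^{N-1}.
\]
Next, for every $m \ge 2$ the equations for $a_m$ and $b_m$ become homogeneous, of the form \eqref{eq per coefficienti div} with $\lambda=m^2-1>0$, so Lemma \ref{coefficienti nulli div} yields $a_m\equiv b_m\equiv 0$. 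The Fourier expansion of $u$ on $\bar\Sigma$ therefore reduces to
\[
u(x',x_N)=\frac{a_0(x')}{2}+a_1\cos x_N+b_1 \sin x_N.
\]

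At this point the assumption on $g$ is forced to be violated. The boundary condition $u(x',0)=0$ pins down $a_0(x')\equiv -2a_1$, so $a_0$ is in fact a constant, while $u_N(x',0)=0$ forces $b_1=0$. Plugging these facts into the $m=0$ equation \eqref{eq a_m completo} makes the left-hand side vanish (because $a_0$ is constant) and kills the boundary contribution, leaving the algebraic identity
\[
0=-a_0+2g(x'),
\]
so that $g(x')\equiv a_0/2$ would have to be constant, contrary to hypothesis.

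I do not expect any serious technical difficulty here: the whole machinery of Section \ref{section div} transfers without change precisely because the $x_N$-Fourier modes of $g$ at frequencies $m \ge 1$ vanish identically. The only conceptually new ingredient is the $m=0$ equation, and the whole content of the theorem is the observation that the rigidity forced on $a_0$ by the boundary conditions, together with the vanishing of $b_1$ and of all higher modes, turns any $x'$-dependence of $g$ into an immediate contradiction.
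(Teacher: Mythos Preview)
Your proposal is correct and follows essentially the same route as the paper's proof: compute the Fourier coefficients of $g$, observe that the $m\ge 1$ equations reduce to those of Section \ref{section div} so that Lemma \ref{lemma on a_1 and b_1 div} and Lemma \ref{coefficienti nulli div} apply verbatim, and then use the boundary condition $u(x',0)=0$ together with the $m=0$ equation to force $g$ constant. The only cosmetic difference is that you also record $b_1=0$, which the paper omits since it is not needed for the contradiction.
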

\begin{proof}
Assume that $g$ is not constant; the formal Fourier coefficients of $g$ are 
\[
c_0(x')=2 g(x'), \quad c_m(x')\equiv 0 \quad d_m(x') \equiv 0 \quad \forall m \geq 1.
\]
By contradiction, let $u$ be a solution of \eqref{pb completo} satisfying \eqref{hp su u}. 
Since $c_1 \equiv 0$ and $d_1 \equiv 0$, equations \eqref{eq a_m completo} and \eqref{eq b_m completo} for $m=1$ are
\begin{align*}
div'\left(\widehat{A}(x') \nabla' a_1(x')\right) &= \frac{1}{\pi} \left(u_N(x',0)-u_N(x',2\pi)\right) \\div'\left(\widehat{A}(x') \nabla' b_1(x')\right) &= \frac{1}{\pi} u(x',2\pi).
\end{align*}
Hence we are in position to follow the proof of Lemma \ref{lemma on a_1 and b_1 div}: $a_1$ and $b_1$ are constant, and $u(x',2\pi), u_N(x',0), u_N(x',2\pi)  \equiv 0$ in $\R^{N-1}$. As a consequence, equations \eqref{eq a_m completo} and \eqref{eq b_m completo} become
\begin{align*}
& div'\left(\widehat{A}(x') \nabla' a_m(x')\right) = \left(m^2-1\right) a_m(x')\\
& div'\left(\widehat{A}(x') \nabla' b_m(x')\right) = \left(m^2-1\right) b_m(x').
\end{align*}
Therefore Lemma \ref{coefficienti nulli div} applies: $a_m= b_m \equiv 0$ for every $m \ge 2$, so that
\[
u(x',x_N)= \frac{a_0(x')}{2}+a_1 \cos{x_N} + b_1 \sin{x_N}.
\]
The boundary condition $u(x',0)=0$ implies that $a_0$ is constant, but \eqref{eq a_m completo} for $m=0$ yields
\[
0=div'\left(\widehat{A}(x')\nabla' a_0\right)=2g(x')-a_0,
\]
a contradiction.
\end{proof}

\subsection{A $1$-D inhomogeneous term}

In this subsection we deal with $g=g(x_N)$. In this situation various phenomena may occur. Let us start with :

\paragraph{Non-existence.} 
If $g(x_N)= \sin x_N$, problem \eqref{pb completo} does not admit any solution satisfying \eqref{hp su u}.  This follows from the following general result.

\begin{proposition}\label{d_1 >0}
Let $N=2$ or $3$, let $g \in \mathcal{C}(\overline{\R}_+^N)$ and assume that $d_1 \geq 0$ in $\R^{N-1}$. If there exists a solution $u$ of \eqref{pb completo} such that \eqref{hp su u} holds true, then $d_1 \equiv 0$, $b_1$ is constant,
\[
u(x',2\pi)=0 \quad \text{and} \quad u_N(x',2\pi)=0 \quad \forall x' \in \R^{N-1}.
\]
\end{proposition}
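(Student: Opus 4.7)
The plan is to read off everything from the single equation for $b_1$ furnished by \eqref{eq b_m completo} with $m=1$, and then combine nonnegativity with the Liouville theorem for divergence form operators (Theorem \ref{teo Liouville div}).

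First, setting $m=1$ in \eqref{eq b_m completo} gives
\[
div'\left(\widehat{A}(x')\nabla' b_1(x')\right) = d_1(x') + \frac{1}{\pi} u(x',2\pi).
\]
Under the hypothesis $d_1 \geq 0$ and the standing assumption $u \geq 0$ from \eqref{hp su u}, the right-hand side is nonnegative. Moreover, \eqref{hp su u} guarantees that $u$ is bounded in $\R^{N-1}\times[0,2\pi]$, and the definition \eqref{coefficienti} then forces $b_1$ to be bounded on $\R^{N-1}$. Since $N-1 \in \{1,2\}$, I will apply Theorem \ref{teo Liouville div} to $v := -b_1$, which is bounded below, satisfies $-div'(\widehat{A}\nabla' v) \geq 0$, and hence must be constant.

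Once $b_1$ is known to be constant, the left-hand side of the displayed equation vanishes identically, i.e.
\[
d_1(x') + \frac{1}{\pi} u(x',2\pi) \equiv 0 \qquad \text{in }\R^{N-1}.
\]
Since both summands are nonnegative, each must vanish separately: this yields $d_1 \equiv 0$ and $u(x',2\pi) \equiv 0$. Finally, $u \geq 0$ together with $u(x',2\pi) = 0$ says that every point $(x',2\pi)$ is an interior minimum of $u$ in $\R^N_+$; hence $\nabla u(x',2\pi) = 0$, and in particular $u_N(x',2\pi) = 0$ for every $x' \in \R^{N-1}$.

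The only delicate step is the application of Theorem \ref{teo Liouville div}. The theorem is stated for $\R^2$, so in the case $N=2$ (where the ambient space is $\R^{N-1}=\R$) one has to invoke the remark given immediately after Theorem \ref{teo Liouville div}, which reduces the scalar case to the two-dimensional one by viewing an ODE solution as a solution of the corresponding PDE in $\R^2$; the direction reversal (from $div'(\widehat{A}\nabla' b_1) \geq 0$ to the form needed by the theorem) is handled by switching to $-b_1$. Everything else is essentially a book-keeping argument, identical in spirit to the first step of the proof of Lemma \ref{lemma on a_1 and b_1 div}.
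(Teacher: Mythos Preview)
Your proof is correct and follows essentially the same route as the paper: plug $m=1$ into \eqref{eq b_m completo}, use nonnegativity of $d_1$ and $u$ together with the boundedness of $b_1$ to invoke the Liouville theorem (Theorem \ref{teo Liouville div}), conclude $b_1$ is constant, and then read off $d_1\equiv 0$, $u(\cdot,2\pi)\equiv 0$ and hence $u_N(\cdot,2\pi)\equiv 0$. Your explicit remark about passing to $v=-b_1$ and about the $N=2$ reduction via the remark following Theorem \ref{teo Liouville div} just makes precise what the paper leaves implicit.
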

\begin{proof}
Let us consider equation \eqref{eq b_m completo} for $m=1$: since $d_1 \geq 0$ and $u \geq 0$, we have
\[
div'\left(\widehat{A}(x') \nabla' b_1(x')\right) =  d_1+\frac{1}{\pi} u(x',2\pi) \geq 0.
\]
Due to the boundedness of $u$ in the strip $\Sigma$, $b_1$ is bounded in absolute value. Since $N=2$ or $3$, we can apply Theorem \ref{teo Liouville div}, obtaining that $b_1$ is constant, which in turns gives 
\[
u(x',2\pi)=-d_1 \Rightarrow u(x',2\pi)=0=d_1 \qquad \forall x' \in \R^{N-1},
\]
because $u$ is nonnegative. Note that necessarily $u_N(x',2\pi)=0$. 
\end{proof}

\begin{remark}
The previous Proposition applies not only if $g=g(x_N)$. For instance, it gives analogous non-existence results when
\begin{itemize}
\item $g(x',x_N)|_\Sigma$ is decreasing in the $x_N$ direction ($g \neq $ const.) .
\item $g(x',x_N)\geq g(x',2\pi -x_N)$ for every $(x',x_N) \in \R^{N-1} \times (0,\pi)$, with strict inequality in one point.
\end{itemize}
\end{remark}

We have a counterpart of the previous statement which rules out the existence of solutions of \eqref{pb completo} satisfying \eqref{hp su u} when $g(x_N)= \cos x_N$.

\begin{proposition}\label{c_1 >0}
Let $N=2$ or $3$, let $g \in \mathcal{C}(\overline{\R}_+^N)$ and assume that $d_1 \equiv 0$, $c_1 \geq 0$ in $\R^{N-1}$. If there exists a solution $u$ of \eqref{pb completo} such that \eqref{hp su u} holds true, then $c_1 \equiv 0$, $a_1$ is constant, and
\[
u_N(x',0)=0 \qquad \forall x' \in \R^{N-1}.
\]
\end{proposition}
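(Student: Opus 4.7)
The plan is to reduce this proposition to Proposition \ref{d_1 >0} together with a second application of the divergence-form Liouville theorem (Theorem \ref{teo Liouville div}), in complete analogy with the proof of Lemma \ref{lemma on a_1 and b_1 div}. The key observation is that the hypothesis $d_1 \equiv 0$ trivially satisfies $d_1 \geq 0$, so Proposition \ref{d_1 >0} applies directly and yields $b_1 \equiv \mathrm{const}$, $u(x',2\pi) \equiv 0$, and $u_N(x',2\pi) \equiv 0$ in $\R^{N-1}$ for free.

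With those three facts in hand, I would then specialize equation \eqref{eq a_m completo} to $m=1$: the terms $-u_N(x',2\pi)$ drops out, and one obtains
\[
\mathrm{div}'\!\left(\widehat{A}(x')\nabla' a_1(x')\right) = c_1(x') + \frac{1}{\pi} u_N(x',0).
\]
Here I would invoke two positivity facts: first, $c_1 \geq 0$ by hypothesis; second, since $u \geq 0$ in $\R^N_+$ and $u(x',0) \equiv 0$ on the boundary, every boundary point is a minimum of $u$ in the $x_N$-direction, so $u_N(x',0) \geq 0$ for all $x' \in \R^{N-1}$. Consequently the right-hand side is nonnegative, i.e. $a_1$ is a bounded subsolution (recall $a_1$ is bounded in $\R^{N-1}$ thanks to \eqref{hp su u}) of a divergence-form inequality to which Theorem \ref{teo Liouville div} applies (with $q=0$, $B=\widehat{A}$, and lower bound $-\|a_1\|_\infty$). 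This yields $a_1 \equiv \mathrm{const}$.

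Finally, from $a_1$ constant we read off
\[
0 = \mathrm{div}'\!\left(\widehat{A}(x')\nabla' a_1(x')\right) = c_1(x') + \frac{1}{\pi} u_N(x',0) \qquad \forall x' \in \R^{N-1},
\]
and since both summands are nonnegative, each must vanish identically. This gives $c_1 \equiv 0$ and $u_N(x',0) \equiv 0$, completing the proof. There is no real obstacle here beyond recognizing that Proposition \ref{d_1 >0} does the initial work; the whole argument is a structural mirror of the treatment of $b_1$ in that proposition, with the boundary term $u_N(x',0)$ playing the role that $u(x',2\pi)$ played before, its sign being controlled by the boundary condition $u(\cdot,0)=0$ combined with $u \geq 0$.
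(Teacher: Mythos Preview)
Your proof is correct and follows essentially the same route as the paper's own proof: invoke Proposition~\ref{d_1 >0} (using $d_1\equiv 0\ge 0$) to kill the boundary term $u_N(x',2\pi)$, specialize \eqref{eq a_m completo} at $m=1$, use $u_N(x',0)\ge 0$ together with $c_1\ge 0$ to see that $a_1$ is a bounded subsolution, and apply Theorem~\ref{teo Liouville div} to force $a_1$ constant and hence $c_1\equiv 0$, $u_N(x',0)\equiv 0$. The only difference is that you spell out a few details (boundedness of $a_1$, the parameters in Theorem~\ref{teo Liouville div}) that the paper leaves implicit.
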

\begin{proof}
In light of Proposition \ref{d_1 >0}, we know that $u_N(x',2\pi)=0$. Moreover, as already observed, from $u(x',0)=0$ and $u \ge 0$ it follows $u_N(x',0) \geq 0$. Thus, considering equation \eqref{eq a_m completo} for $m=1$, we get 
\[
div'\left(\widehat{A}(x') \nabla' a_1(x')\right) =  c_1+\frac{1}{\pi} u_N(x',0) \geq 0,
\]
since $c_1 \geq 0$. The function $a_1$ is bounded in absolute value, hence for Theorem \ref{teo Liouville div} it is constant. Therefore 
\[
c_1+ u_N(x',0) = 0 \Rightarrow u_N(x',0)=0=c_1 \qquad \forall x' \in \R^{N-1}. \qedhere
\]
\end{proof}

\paragraph{Existence and multiplicity.}
For every $N \ge 2$,
\[
u_A(x',x_N)= x_N + A \sin x_N, \qquad A \in [-1,1],
\]
is a one-parameter family of solutions of \eqref{pb completo} with $g(x_N)=x_N$; each $u_A$ satisfies \eqref{hp su u}. Note that in this case 
$c_1=0$ while $d_1<0$, so that the previous Propositions do not apply. Note also that $u_A$ is unbounded in $\R_+^N$ for every $A \in [-1,1]$. 

\paragraph{Existence, uniqueness and $1$-D symmetry.}
For $ m\ge2$, the function $u(x',x_N)=\frac{1}{m^2-1}\left(1-\cos(m x_N)\right)$ is the {\it unique} solution, satisfying \eqref{hp su u}, of problem \eqref{pb completo} for $g(x_N)=\frac{1}{m^2-1}+\cos(m x_N)$. Furthermore, $u$ has 1-D symmetry. The uniqueness result is a consequence of the following general result.

\begin{theorem}\label{theorem pb complete 1D}
Let $N=2$ or $3$, let $g \in \mathcal{C}(\R)$ be such that 
\beq\label{hp su g}
c_1 \ge0 \quad \text{and} \quad d_1 \ge 0,
\eeq
where $c_m$ and $d_m$ are the Fourier coefficients of the function $g$ in $(0,2\pi)$, defined by \eqref{coeff of g}. If there exists $u \in \mathcal{C}^2(\overline{\R}^N_+)$ which solves problem \eqref{pb completo} and satisfies \eqref{hp su u}, then necessarily $c_1=d_1=0$. In this case, the restriction of $u$ to ${\overline{\Sigma}}$ is $1$-dimensional and is uniquely determined as the solution of
\beq\label{1 D pb}
\begin{cases}
-u''(x_N)=u(x_N)-g(x_N) & \text{in $(0,2\pi)$}\\
u(0)=u(2\pi)=0 \\
u'(0)=u'(2\pi) = 0.
\end{cases}
\eeq
In particular, in ${\overline{\Sigma}}$ we have
\begin{multline}\label{u in series}
u(x',x_N)=\frac{c_0}{2}+ \left(-\frac{c_0}{2}+ \sum_{m=2}^{+\infty} \frac{c_m}{m^2-1} \right) \cos x_N +  \left(\sum_{m=2}^{+\infty} \frac{m}{m^2-1}d_m \right) \sin x_N \\
-\sum_{m=2}^{+\infty} \left(\frac{c_m}{m^2-1} \cos (m x_N) + \frac{d_m}{m^2-1} \sin (mx_N) \right).
\end{multline} 
\end{theorem}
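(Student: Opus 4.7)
The plan is to combine Propositions \ref{d_1 >0} and \ref{c_1 >0} with the Liouville-type Lemma \ref{coefficienti nulli div} in order to pin down every Fourier coefficient of $u$. Since $g=g(x_N)$, its Fourier coefficients $c_m$ and $d_m$ are \emph{constants} in $x'$, so the hypothesis \eqref{hp su g} is being made on two real numbers. Proposition \ref{d_1 >0} then gives $d_1=0$, $b_1$ constant, and the vanishings $u(x',2\pi)\equiv 0 \equiv u_N(x',2\pi)$, while Proposition \ref{c_1 >0} yields $c_1=0$, $a_1$ constant, and $u_N(x',0)\equiv 0$. This already establishes the necessity of $c_1=d_1=0$ and all the boundary identities one needs for the rest of the argument.

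With all boundary terms in equations \eqref{eq a_m completo} and \eqref{eq b_m completo} eliminated, for every $m\ge 2$ the coefficients $a_m(x')$ and $b_m(x')$ are bounded solutions of an equation of the form
\[
div'\left(\widehat{A}(x')\nabla' v\right) = (m^2-1)v + K,
\]
where $K\in\R$ is a constant ($K=c_m$ or $K=d_m$). The constant function $-K/(m^2-1)$ is a particular solution, so $w:=v+K/(m^2-1)$ is a \emph{bounded} solution of the homogeneous equation $-div'(\widehat{A}\nabla' w) + (m^2-1)w = 0$. Applying Lemma \ref{coefficienti nulli div} to both $w$ and $-w$ forces $w\equiv 0$, giving $a_m \equiv -c_m/(m^2-1)$ and $b_m \equiv -d_m/(m^2-1)$. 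The same trick applied to equation \eqref{eq a_m completo} for $m=0$, which now reads $div'(\widehat{A}\nabla' a_0)= c_0 - a_0$, yields $a_0 \equiv c_0$.

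At this stage every Fourier coefficient of $u$ is a \emph{constant}, so that the series represents a function depending only on $x_N$ and the restriction of $u$ to $\bar\Sigma$ is genuinely $1$-dimensional. The PDE collapses to the ODE $-u''=u-g$ on $(0,2\pi)$, which together with the boundary identities already derived is precisely problem \eqref{1 D pb}. Uniqueness for this over-determined BVP is elementary: the difference of two solutions satisfies $-w''=w$ with $w(0)=w(2\pi)=w'(0)=w'(2\pi)=0$, and the general solution $\alpha\cos x_N+\beta\sin x_N$ cannot meet all four conditions unless $\alpha=\beta=0$. The explicit expansion \eqref{u in series} is then read off by plugging the determined values of $a_m,b_m$ into the Fourier series and using $u(x',0)=0$ and $u_N(x',0)=0$ to fix $a_1$ and $b_1$ (term-by-term differentiation being legitimate thanks to the decay $m|a_m|,m|b_m|=O(|c_m|/m)+O(|d_m|/m)$ and Parseval for $g\in L^2(0,2\pi)$). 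The only conceptual step beyond what the paper has already set up is the two-sided use of Lemma \ref{coefficienti nulli div}, which upgrades its ``subsolution-with-algebraic-growth'' form into a genuine Liouville statement for bounded solutions of the homogeneous equation; everything else is bookkeeping on the now-explicit Fourier coefficients.
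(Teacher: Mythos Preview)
Your overall strategy matches the paper's almost exactly, but there is one genuine error in the step for $a_0$. You write that ``the same trick applied to equation \eqref{eq a_m completo} for $m=0$, which now reads $div'(\widehat{A}\nabla' a_0)= c_0 - a_0$, yields $a_0 \equiv c_0$.'' This is false: setting $w=a_0-c_0$ gives $-div'(\widehat{A}\nabla' w)+(-1)\,w=0$, and Lemma \ref{coefficienti nulli div} requires the zeroth-order coefficient to satisfy $c(x')\ge\lambda>0$. With coefficient $-1$ the conclusion is simply not available; indeed, already for $\widehat{A}=I$ and $N=2$ the equation $-w''-w=0$ has the bounded nontrivial solution $w(x')=\sin x'$, so no Liouville statement of this type can hold.

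The paper's route around this is different and uses the Dirichlet condition rather than the $m=0$ equation alone: once $a_1$ and all $a_m$, $m\ge 2$, are known to be constants, the identity $u(x',0)=0$ reads
\[
\frac{a_0(x')}{2}+a_1+\sum_{m\ge 2} a_m=0,
\]
which forces $a_0(x')$ to be constant. Only then does one plug into the $m=0$ equation to obtain $a_0=c_0$. (There is a minor convergence issue here that the paper addresses by noting the series and its $x_N$-derivative converge uniformly, so one may evaluate at $x_N=0$.) With this correction your argument goes through and is essentially the paper's proof.
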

\begin{proof}
From Propositions \ref{d_1 >0} and \ref{c_1 >0} we know that, if $u$ exists, then $c_1$ and $d_1$ has to be $0$; in this case $a_1$ and $b_1$ are constant, and $u(x',2\pi), u_N(x',0),$ \\
$u_N(x',2\pi)=0$ in $\R^{N-1}$. Therefore, equations \eqref{eq a_m completo} and \eqref{eq b_m completo} for $m \geq 2$ simplify as
\begin{align*}
div'\left(\widehat{A}(x') \nabla' a_m(x')\right) &=  (m^2-1)a_m(x') + c_m\\
div'\left(\widehat{A}(x') \nabla' b_m(x')\right) &=  (m^2-1) b_m(x')+d_m,
\end{align*}
i.e. 
\begin{align*}
&-div'\left(\widehat{A}(x') \nabla' \left(a_m(x')+\frac{c_m}{m^2-1}\right)\right) + (m^2-1) \left(a_m(x')+\frac{c_m}{m^2-1}\right)=0\\
&-div'\left(\widehat{A}(x') \nabla' \left(b_m(x')+\frac{d_m}{m^2-1}\right)\right) + (m^2-1) \left(b_m(x')+\frac{d_m}{m^2-1}\right)=0.
\end{align*}
We can apply Lemma \ref{coefficienti nulli div}, obtaining
\begin{align*}
&a_m(x')=a_m= -\frac{c_m}{m^2-1} \qquad \forall m \geq 2\\
&b_m(x')=b_m= -\frac{d_m}{m^2-1} \qquad \forall m \geq 2.
\end{align*}
Now, let us consider in $\Sigma$
\begin{multline*}
\frac{a_0(x')}{2}+ a_1 \cos x_N + b_1 \sin x_N \\
-\sum_{m=2}^{+\infty} \left(\frac{c_m}{m^2-1} \cos (mx_N) + \frac{d_m}{m^2-1} \sin (mx_N) \right).
\end{multline*}
It is a series of $\mathcal{C}^\infty$ functions which is convergent together with the series of the derivates w.r.t. $x_N$, since the sequences $\{c_m\} $ and $\{d_m\}$ belong to $\mathit{l}^2$. In $\Sigma$ the series is equal to $u$, and the equality can be extended up to the boundary since both the series itself and $u$ are $\mathcal{C}^1(\bar \Sigma)$. We also know that $u(x',0)=0$ and $u_N(x',0)=0$. Using the Dirichlet boundary condition we get that $a_0$ is constant too, and in particular equation \eqref{eq a_m completo} for $m=0$ implies $a_0=c_0$. Now, from the "initial" conditions we get the expression of $a_1$ and $b_1$. \\
To sum up, we proved that $u|_{\Sigma}$ is $1$-D, thus a solution of 
\[
-u''(x_N)=u(x_N)-g(x_N) \qquad \text{for } x_N \in (0,2\pi)
\]
with the boundary conditions stated in \eqref{1 D pb}.
\end{proof}
As an immediate consequence we obtain
\begin{theorem}\label{theorem pb complete 1D 2}
Let $N=2$ or $3$, let $g \in \mathcal{C}(\R)$ be a $2\pi$-periodic function satisfying \eqref{hp su g}, where $c_m$ and $d_m$ are the Fourier coefficients of the function $g$. If there exists $u \in \mathcal{C}^2(\overline{\R}^N_+)$ which solves problem \eqref{pb completo} and satisfies \eqref{hp su u}, then necessarily $c_1=d_1=0$. In this case, $u$ is $1$-dimensional, $2\pi$-periodic and it is uniquely determined as the solution of
\[
\begin{cases}
-u''(x_N)=u(x_N)-g(x_N) & \text{in $(0,+\infty)$}\\
u(0)=u(2\pi)=0 \\
u'(0)=u'(2\pi) = 0.
\end{cases}
\]
The expression of $u$ in Fourier series is given by \eqref{u in series}.
\end{theorem}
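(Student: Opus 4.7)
The plan is to deduce this result directly from Theorem \ref{theorem pb complete 1D} via a translation argument analogous to the one used at the end of the proof of Theorem \ref{teo pb modello}. Since $g \in \mathcal{C}(\R)$ satisfies \eqref{hp su g} and since $u$ solves \eqref{pb completo} and \eqref{hp su u}, Theorem \ref{theorem pb complete 1D} applies verbatim and yields $c_1 = d_1 = 0$, together with the fact that $u|_{\bar\Sigma}$ is one-dimensional, uniquely determined by the BVP \eqref{1 D pb}, and given by the explicit Fourier series \eqref{u in series}. Crucially, the proof of that theorem also establishes the boundary information
\[
u(x',0) = u(x',2\pi) = 0, \qquad u_N(x',0) = u_N(x',2\pi) = 0 \quad \forall x' \in \R^{N-1}.
\]

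Next I would set $v(x',x_N) := u(x', x_N + 2\pi)$ on $\overline{\R}^N_+$. Using the $2\pi$-periodicity of $g$, one checks immediately that $v$ is a classical solution of $-div(A(x')\nabla v) = v - g(x_N)$ in $\R^N_+$. The Dirichlet condition $v(x',0) = u(x',2\pi) = 0$ follows from the boundary information recalled above, while the control \eqref{hp su u} for $v$ is inherited from $u$ (the bound of $v$ on the strip $\R^{N-1}\times[0,M]$ is the bound of $u$ on $\R^{N-1}\times[0,M+2\pi]$). Therefore $v$ itself fulfils all the hypotheses of the theorem, so that applying Theorem \ref{theorem pb complete 1D} to $v$ shows that $v|_{\bar\Sigma}$ is determined by the same one-dimensional BVP \eqref{1 D pb}; by uniqueness of solutions of that BVP one has $v \equiv u$ on $\bar\Sigma$, i.e.
\[
u(x', x_N + 2\pi) = u(x', x_N) \qquad \forall (x', x_N) \in \bar\Sigma.
\]

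Iterating this identity (equivalently, applying the argument to $v_k(x',x_N) := u(x', x_N + 2k\pi)$ for each $k \in \N$) one concludes that $u$ is $2\pi$-periodic in $x_N$ on the whole $\overline{\R}^N_+$. Consequently $u$ coincides everywhere with the $2\pi$-periodic extension of the 1-D solution of \eqref{1 D pb}, and the representation \eqref{u in series} carries over from $\bar\Sigma$ to $\overline{\R}^N_+$. I do not anticipate a serious obstacle, since the result is essentially a corollary: the only delicate points are the verification that the translate $v$ still meets the Dirichlet condition --- which is precisely where the identity $u(\cdot, 2\pi) \equiv 0$ inherited from Theorem \ref{theorem pb complete 1D} is used --- and the fact that the resulting $2\pi$-periodic extension is indeed of class $\mathcal{C}^2$ at the junctions $x_N = 2k\pi$, a fact ensured by the matching derivative conditions $u'(0) = u'(2\pi) = 0$ built into \eqref{1 D pb} together with the $2\pi$-periodicity of $g$ in the ODE $-u'' = u - g$.
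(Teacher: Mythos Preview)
Your proposal is correct and matches the paper's intended approach: the paper presents Theorem \ref{theorem pb complete 1D 2} as ``an immediate consequence'' of Theorem \ref{theorem pb complete 1D} without spelling out the details, and your translation argument $v(x',x_N)=u(x',x_N+2\pi)$ is precisely the iteration scheme used at the end of the proof of Theorem \ref{teo pb modello} to pass from the strip $\bar\Sigma$ to the whole half-space. The verification that $v$ inherits the Dirichlet condition from $u(\cdot,2\pi)\equiv 0$ (supplied by Propositions \ref{d_1 >0} and \ref{c_1 >0}) and that the $2\pi$-periodicity of $g$ makes $v$ a solution of the same problem are exactly the points one needs, and you have identified them.
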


In view of the example with $g(x_N)= x_N$ ($c_1=0$, $d_1<0$) we see that the assumptions $c_1 \ge 0$ and $d_1 \ge 0$ are necessary for Theorem \ref{theorem pb complete 1D} and Theorem \ref{theorem pb complete 1D 2}. We also remark that the non-negativity of both $c_1$ and $d_1$ is not sufficient to guarantee the existence of a solution of \eqref{pb completo} which satisfies \eqref{hp su u}. Indeed, as an immediate consequence of Proposition \ref{teo 10} (proved in the next subsection),  we have non-existence of solutions of \eqref{pb completo} satisfying \eqref{hp su u} in case 
\[
g(x_N)= C_1 \sin(m x_N) \quad \text{or} \quad g(x_N)= C_2 \cos(m x_N) \quad m \ge 2, C_1,C_2 \in \R.
\]
Note that $c_1 = d_1 =0$ in the above examples.

Another class of functions $g$ for which there is non existence is considered in the next result, of independent interest, 

\begin{proposition}\label{max prin}
Let $N \ge 2$. If $g \leq 0$ and it is non-constant, then a nonnegative solution of \eqref{pb completo} has to be positive. In particular, if $N=2,3$ and  $d_1 \ge 0$, then problem \eqref{pb completo} does not admit any solution satisfying \eqref{hp su u}. 
\end{proposition}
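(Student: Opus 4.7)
The plan is to split the proof into two self-contained steps: first, establish strict positivity of any nonnegative solution when $g\le0$ is non-constant; second, combine this with Proposition \ref{d_1 >0} to rule out existence in dimension $N=2,3$ when $d_1\ge0$.

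For the first (and main) step, I would exploit the sign of $u-g$: since $u\ge 0$ and $-g\ge 0$, the right-hand side of $-\mathrm{div}(A(x')\nabla u) = u - g$ is nonnegative everywhere in $\R^N_+$. Hence $u$ is a classical nonnegative supersolution of the degenerate elliptic equation $-\mathrm{div}(A(x')\nabla w)=0$ in $\R^N_+$. The assumptions on $A$ (pointwise positive definiteness, $C^1$ entries) together with $u\in\mathcal C^2$ put us in the regime where the strong maximum principle applies. The dichotomy then says that either $u>0$ in the open half-space, or $u\equiv 0$ in $\R^N_+$. In the latter case, plugging back into the PDE forces $0=u-g=-g$, hence $g\equiv0$, contradicting the assumption that $g$ is non-constant. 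Therefore $u>0$ in $\R^N_+$.

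For the second step, assume by contradiction that $N=2,3$, $d_1\ge 0$, and that a solution $u$ satisfying \eqref{hp su u} exists. By Proposition \ref{d_1 >0} the trace $u(x',2\pi)$ vanishes identically on $\R^{N-1}$. But the points $(x',2\pi)$ lie in the open half-space $\R^N_+$, and by the first step $u$ is strictly positive there. This contradiction concludes the argument.

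The only non-routine ingredient is the use of the strong maximum principle under the present hypotheses on $A$, which is only pointwise (not uniformly) elliptic; however, since we work with $C^2$ classical solutions and $A$ has continuous coefficients with $\widehat A(x')$ positive definite at every point, the standard Hopf-type strong maximum principle goes through without modification. Everything else is a short application of the previously established Proposition \ref{d_1 >0}.
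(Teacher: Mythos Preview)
Your proposal is correct and follows essentially the same route as the paper's proof: apply the strong maximum principle (using $u-g\ge 0$) to force $u>0$ in the open half-space, then invoke Proposition~\ref{d_1 >0} to get $u(x',2\pi)=0$ and reach a contradiction. The only difference is that you spell out the dichotomy and the exclusion of the case $u\equiv 0$ explicitly, whereas the paper compresses this into a single sentence.
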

\begin{proof} 
By the strong maximum principle $u$ must be positive in $\R^N_+$. Since $d_1\ge 0$, if a solution $u$ existed, from Proposition \ref{d_1 >0} it should satisfy $u(x',2\pi)=0$ for every $x' \in \R^{N-1}$. A contradiction.
\end{proof}

A typical example is given by the function $g(x_N)=-\theta-\cos x_N$, with $\theta \ge 1$. Note that $c_1<0$ and $d_1=0$ in this example.  


\medskip

\subsection{General inhomogeneous terms}

In this subsection we will consider $g$-s depending on both $x'$ and $x_N$. As before, we will denote by $c_m$ and $d_m$ the Fourier coefficient of the $2\pi$-periodic extension of $x_N \in (0,2\pi)\mapsto g(x',x_N)$.

\medskip 

We have always begun our analysis trying to prove that 
\beq\label{passo base}
u(x',2\pi) \equiv 0 \quad \text{and} \quad u_N(x',0) \equiv 0 \quad \text{in $\R^{N-1}$}.
\eeq 
As a consequence, equations \eqref{eq a_m completo} and \eqref{eq b_m completo} are considerably simplified, since all the boundary terms have to vanish identically:
\begin{align*}
& div'\left(\widehat{A}(x') \nabla' a_m(x')\right) = \left(m^2-1\right) a_m(x') + c_m(x')\\
& div'\left(\widehat{A}(x') \nabla' b_m(x')\right) = \left(m^2-1\right) b_m(x') + d_m(x').
\end{align*}
We have already observed that, if $N=2$ or $3$, sufficient conditions in order to obtain \eqref{passo base} are $c_1 \ge 0$ and $d_1 \ge 0$.\\
In general (for every $N \ge 2$), assume that \eqref{passo base} holds true. Assume also that there exists $\bar m \geq 2$ such that $c_{\bar m} \equiv 0$. Then
\[
div'\left(\widehat{A}(x') \nabla' a_{\bar m}(x')\right) = \left(\bar m^2-1\right) a_{\bar m}(x'),
\]
which is of type \eqref{eq per coefficienti div} with $\lambda > 0$. From Lemma \ref{coefficienti nulli div} it follows $a_{\bar m} \equiv 0$. The same holds true for every $b_{\bar m}$ such that $d_{\bar m} \equiv 0$. We point out that this is true even for $N>3$.
\begin{proposition}\label{sui coeff nulli}
Let $N \ge 2$, let $g \in \mathcal{C}(\overline{\R}_+^N)$, let $u$ be a solution of \eqref{pb completo} satisfying \eqref{hp su u}, and let $a_m$ and $b_m$ be its formal Fourier coefficients in $\Sigma$ defined by \eqref{coefficienti}; assume that \eqref{passo base} holds true. Then for every $m \geq 2$ such that $c_m \equiv 0$ it results $a_m \equiv 0$, and for every $m \geq 2$ such that $d_m \equiv 0$ it results $b_m \equiv 0$.
\end{proposition}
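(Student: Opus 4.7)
The plan is to reduce the equations for $a_m$ and $b_m$ to the homogeneous form \eqref{eq per coefficienti div} and invoke Lemma \ref{coefficienti nulli div}. The key point is that assumption \eqref{passo base} kills all boundary terms in the equations \eqref{eq a_m completo} and \eqref{eq b_m completo} derived earlier.

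First, I would observe that $u_N(x',2\pi) \equiv 0$ as well: since $u \geq 0$ on $\overline{\R}^N_+$ and $u(x',2\pi) = 0$ by \eqref{passo base}, each point $(x',2\pi)$ is an interior (with respect to $x_N$) minimum in the $x_N$ direction, so the partial derivative $u_N$ vanishes there. Hence, combining this with \eqref{passo base}, equations \eqref{eq a_m completo} and \eqref{eq b_m completo} reduce to
\begin{align*}
 div'\bigl(\widehat{A}(x')\nabla' a_m(x')\bigr) &= (m^2-1) a_m(x') + c_m(x'), \\
 div'\bigl(\widehat{A}(x')\nabla' b_m(x')\bigr) &= (m^2-1) b_m(x') + d_m(x'),
\end{align*}
for every $m \geq 1$.

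Now fix $m \geq 2$ with $c_m \equiv 0$. Then $a_m$ is a classical solution of
\[
-div'\bigl(\widehat{A}(x')\nabla' a_m(x')\bigr) + (m^2-1) a_m(x') = 0 \qquad \text{in } \R^{N-1},
\]
which is exactly \eqref{eq per coefficienti div} with $\lambda = m^2 - 1 > 0$. Moreover, from assumption \eqref{hp su u}, the function $x_N \mapsto u(x', x_N)$ is bounded on $[0, 2\pi]$ uniformly in $x'$, so the defining integral in \eqref{coefficienti} shows that $a_m$ is bounded on $\R^{N-1}$; in particular $a_m^+$ has at most algebraic growth. Applying Lemma \ref{coefficienti nulli div} to $a_m$ yields $a_m \leq 0$, and applying the same lemma to $-a_m$ (which satisfies the same homogeneous equation, hence is also a subsolution) yields $a_m \geq 0$; therefore $a_m \equiv 0$. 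The argument for $b_m$ when $d_m \equiv 0$ is identical.

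Since the work has already been done in Lemma \ref{coefficienti nulli div}, there is no real obstacle here; the only step requiring attention is the harmless symmetrisation trick of applying the one-sided Lemma to both $\pm a_m$ (respectively $\pm b_m$) to upgrade the conclusion from $\leq 0$ to identically zero. Note that, in contrast with the earlier arguments in Section \ref{section div}, no two-dimensional Liouville theorem is used here, which is why the conclusion holds for arbitrary $N \geq 2$.
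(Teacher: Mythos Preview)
Your proof is correct and follows exactly the same route as the paper: the paper's argument (given in the paragraph immediately preceding the Proposition) also reduces to the homogeneous equation \eqref{eq per coefficienti div} using \eqref{passo base} and then invokes Lemma \ref{coefficienti nulli div}. Your write-up is in fact slightly more explicit than the paper's, since you spell out both why $u_N(x',2\pi)\equiv 0$ and the $\pm a_m$ symmetrisation that upgrades the one-sided conclusion of Lemma \ref{coefficienti nulli div} to $a_m\equiv 0$; the paper leaves these steps to the reader.
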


As far as the coefficient $a_0$ is concerned, we have a similar result, but only in low dimension.

\begin{proposition}\label{su a_0 nullo}
Let $N=2$ or $3$, let $g \in\mathcal{C}(\overline{\R}_+^N)$, let $u$ be a solution of \eqref{pb completo} satisfying \eqref{hp su u}, and let $a_m$ and $b_m$ be its formal Fourier coefficients defined by \eqref{coefficienti}; assume that \eqref{passo base} holds true. If $c_0 \le 0$, then $a_0 = c_0 \equiv 0$.
\end{proposition}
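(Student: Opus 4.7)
The plan is to reduce the equation for $a_0$ to a setting where Lemma \ref{coefficienti nulli div} can be directly applied, using the sign of $c_0$ to get one inequality and the sign of $u$ to get the opposite one.

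First I would observe that the assumption \eqref{passo base} gives $u(x',2\pi)\equiv 0$, and since $u \ge 0$ the boundary points $(x',2\pi)$ are minima, so $u_N(x',2\pi)\equiv 0$ automatically. Combined with $u_N(x',0)\equiv 0$ from \eqref{passo base}, the equation \eqref{eq a_m completo} for $m=0$ collapses to
\[
-div'\!\left(\widehat{A}(x')\nabla' a_0(x')\right) + a_0(x') = c_0(x') \qquad \text{in } \R^{N-1}.
\]
Since $c_0 \le 0$ by hypothesis, this says that $a_0$ is a subsolution of
\[
-div'\!\left(\widehat{A}(x')\nabla' v(x')\right) + c(x')\, v(x') \le 0, \qquad c(x') \equiv 1,
\]
so the coefficient $c(x')$ satisfies $c(x') \ge \lambda = 1 > 0$ as required.

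Next I would check the growth hypothesis. By the definition \eqref{coefficienti} and assumption \eqref{hp su u} applied with $M=2\pi$, one has
\[
0 \le a_0(x') = \frac{1}{\pi}\int_0^{2\pi} u(x',x_N)\,dx_N \le 2\,C(2\pi),
\]
so $a_0$ (hence also $a_0^+$) is bounded on $\R^{N-1}$, trivially of at most algebraic growth. Lemma \ref{coefficienti nulli div} then yields $a_0 \le 0$ in $\R^{N-1}$. Together with $a_0 \ge 0$ from the nonnegativity of $u$, this gives $a_0 \equiv 0$.

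Finally, plugging $a_0 \equiv 0$ back into the simplified equation forces $c_0 \equiv 0$, completing the proof. The argument uses no Liouville result beyond Lemma \ref{coefficienti nulli div}; the only step that deserves a moment of care is the passage from $u(x',2\pi) \equiv 0$ and $u \ge 0$ to $u_N(x',2\pi) \equiv 0$, which cleans up the boundary terms in the equation for $a_0$. (The restriction $N=2,3$ enters only through the fact that \eqref{passo base} itself is a plausible assumption in those dimensions, not in the present deduction.)
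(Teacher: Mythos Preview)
There is a sign error in your derivation of the reduced equation for $a_0$, and it is fatal to the argument. Equation \eqref{eq a_m completo} with $m=0$ and the boundary terms removed reads
\[
div'\!\left(\widehat{A}(x')\nabla' a_0(x')\right) = -\,a_0(x') + c_0(x'),
\]
that is, $div'(\widehat{A}\nabla' a_0) + a_0 = c_0$, not $-div'(\widehat{A}\nabla' a_0)+a_0=c_0$ as you wrote. Consequently
\[
-div'\!\left(\widehat{A}(x')\nabla' a_0(x')\right) + a_0(x') = 2a_0(x') - c_0(x') \ge 0,
\]
so $a_0$ is a \emph{super}solution, not a subsolution, of the coercive operator, and Lemma \ref{coefficienti nulli div} does not apply. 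Applying the lemma instead to $-a_0$ only returns $a_0\ge 0$, which you already knew. More fundamentally, the operator $v\mapsto div'(\widehat{A}\nabla' v)+v$ is of Helmholtz type and admits bounded nontrivial solutions on $\R^{N-1}$, so no bounded-subsolution argument of the kind in Lemma \ref{coefficienti nulli div} can force $a_0\equiv 0$ from this equation alone.

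The paper proceeds differently: from $a_0\ge 0$ and $c_0\le 0$ one gets $div'(\widehat{A}\nabla' a_0)\le 0$, and then the Liouville theorem for bounded-below supersolutions (Theorem \ref{teo Liouville div}), valid precisely because $N-1\le 2$, forces $a_0$ to be constant; plugging back gives $a_0=c_0$, whence $0\le a_0=c_0\le 0$. In particular, your parenthetical remark that the restriction $N=2,3$ plays no role in the deduction is not correct: that restriction is exactly what makes this step go through.
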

\begin{proof}
Since $u \geq 0$, equation \eqref{eq a_m completo} for $m=0$ is
\[
div'\left(\widehat{A}(x') \nabla' a_0(x')\right) = - a_0(x') + c_0(x') \leq 0 \qquad \text{in $\R^{N-1}$}.
\]
Since $a_0$ is bounded and $N=2$ or $3$, for Theorem \ref{teo Liouville div} $a_0$ is constant. But then 
\[
0= div'\left(\widehat{A}(x') \nabla' a_0(x')\right) = - a_0 + c_0. 
 \]
 Thus, $ 0 \le a_0 = c_0 \le 0$. 
\end{proof}

\bigskip

In what follows we first consider 
\[
g(x',x_N)=f(x')\varphi(x_N) \in \mathcal{C}(\overline{\R}^N_+), \qquad g \not\equiv 0.
\]
In the expansion of the $2\pi$-periodic extension of $x_N \in (0,2\pi) \mapsto g(x',x_N)$, the Fourier coefficients are
\[
c_m(x')=f(x')\gamma_m \quad \forall m \geq 0, \qquad
 d_m(x')=f(x')\delta_m \quad \forall m \geq 1,
\]
where $\gamma_m$ and $\delta_m$ are the (constant) Fourier coefficients of the $2\pi$-periodic extension of $x_N \in (0,2\pi) \mapsto \varphi(x_N)$.

\begin{remark}
Let $N=2$ or $3$. In light of Propositions \ref{d_1 >0} and \ref{c_1 >0}, we know that if $f(x') \delta_1 \geq 0$, $f(x') \delta_1 \neq 0$, then there are no solutions of \eqref{pb completo} satisfying \eqref{hp su u}. The same holds true if $f(x') \gamma_1 \geq 0$, $f (x')\gamma_1\neq 0$ and $\delta_1=0$.
\end{remark}

If $N=2$ or $3$ and $\gamma_1=\delta_1=0$, from Propositions \ref{d_1 >0} and \ref{c_1 >0} we know that $a_1$ and $b_1$ are constant and \eqref{passo base} holds true. Hence, equations \eqref{eq a_m completo} and \eqref{eq b_m completo} simplify as
\begin{align*}
& div'\left(\widehat{A}(x')\nabla' a_m\right)= (m^2-1)a_m + f(x')\gamma_m \qquad \forall m \neq 1,\\
& div'\left(\widehat{A}(x')\nabla' b_m\right)= (m^2-1)b_m + f(x')\delta_m \qquad \forall m \ge 2.
\end{align*}
It is not difficult to obtain the following non-existence result.

\begin{proposition}\label{teo 10}
Let $N=2$ or $3$, let $g \in \mathcal{C}(\overline{\R}^N_+)$ and assume 
\[
g(x',x_N)=f(x') \cos(m x_N) \quad \text{or} \quad g(x',x_N)=f(x') \sin(m x_N),
\]
where $m \geq 2$ and $f$ is not identically $0$. Then there are no solutions of \eqref{pb completo} satisfying \eqref{hp su u}.
\end{proposition}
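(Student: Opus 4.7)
The plan is to argue by contradiction. Assume $u\in\mathcal{C}^2(\overline{\R}^N_+)$ solves \eqref{pb completo} with $g=f(x')\cos(m x_N)$, $m\ge 2$, $f\not\equiv 0$, and satisfies \eqref{hp su u}; the sine case is treated analogously and will be indicated at the end. Expanding the $2\pi$-periodic extension of $x_N\mapsto g(x',x_N)$ in Fourier series, one reads off $c_m(x')\equiv f(x')$ while $c_k\equiv 0$ for all $k\ge 0$, $k\ne m$, and $d_k\equiv 0$ for every $k\ge 1$. In particular $c_1\equiv d_1\equiv 0\ge 0$, so Propositions \ref{d_1 >0} and \ref{c_1 >0} apply and yield property \eqref{passo base} together with the constancy of $a_1$ and $b_1$. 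Since $c_0\equiv 0\le 0$, Proposition \ref{su a_0 nullo} gives $a_0\equiv 0$. Finally, Proposition \ref{sui coeff nulli} applied to every index $k\ge 2$ with $k\ne m$, and to every $b_k$ with $k\ge 2$, forces all such coefficients to vanish identically. The Fourier expansion of $u$ in $\bar\Sigma$ thus collapses to
\[
u(x',x_N)=a_1\cos x_N + b_1\sin x_N + a_m(x')\cos(m x_N).
\]

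Next I would rigidify the remaining unknowns using the boundary data. Since $a_0\equiv 0$, the Dirichlet condition $u(x',0)=0$ forces $a_m(x')\equiv -a_1$, a constant, and the relation $u_N(x',0)=0$ (which is already part of \eqref{passo base}) forces $b_1=0$. Substituting $a_m\equiv -a_1$ in the simplified version of \eqref{eq a_m completo}, namely $div'\bigl(\widehat{A}\nabla' a_m\bigr)=(m^2-1)a_m+f$, and noting that the left-hand side vanishes identically, I obtain $f(x')\equiv(m^2-1)a_1=:K$. Since $f\not\equiv 0$, necessarily $K\ne 0$, and therefore
\[
u(x',x_N)=\frac{K}{m^2-1}\bigl(\cos x_N - \cos(m x_N)\bigr) \qquad \text{in }\bar\Sigma.
\]

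The contradiction now comes from $u\ge 0$. Indeed $\int_0^{2\pi}\bigl(\cos x_N-\cos(m x_N)\bigr)\,dx_N=0$ and, because $m\ge 2$, the integrand is not identically zero on $[0,2\pi]$; a continuous function with zero mean that is not identically zero must change sign, and with $K\ne 0$ this forces $u$ to take strictly negative values in $\bar\Sigma$, contradicting \eqref{hp su u}. The sine case runs in parallel: the surviving coefficient is now $b_m(x')$; $u(x',0)=0$ gives $a_1=0$, and $u_N(x',0)=0$ combined with the collapsed expansion gives $b_m(x')\equiv -b_1/m$, a constant; equation \eqref{eq b_m completo} then forces $f\equiv K$ with $K\ne 0$, and one reaches $u=\tfrac{K}{m^2-1}\bigl(m\sin x_N-\sin(m x_N)\bigr)$, whose integrand on $[0,2\pi]$ again has zero mean without vanishing identically, producing the same sign contradiction. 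I expect the only delicate step to be the sequential bookkeeping in the first paragraph, verifying that the hypotheses of each imported proposition are actually met for the correct index; once the Fourier expansion has collapsed to the three-term sum above, the passage from "$a_m$ (or $b_m$) constant" to "$f$ constant" via the PDE for that coefficient is the pivotal rigidification, and the final contradiction is then an elementary mean-value argument on an explicit trigonometric polynomial.
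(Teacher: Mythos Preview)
Your proof is correct and follows essentially the same route as the paper: contradiction, collapse of the Fourier expansion via Propositions \ref{d_1 >0}, \ref{c_1 >0}, \ref{sui coeff nulli}, \ref{su a_0 nullo}, rigidification of the surviving coefficient to a constant, hence $f\equiv$ constant, and an explicit trigonometric polynomial for $u$. The only difference is the final sign contradiction: the paper evaluates $\frac{K}{m^2-1}(\cos x_N-\cos(mx_N))$ at the two points $x_N=2\pi/m$ and $x_N=\pi/m$ to force $K\le 0$ and $K\ge 0$ (and uses oddness in the sine case), while you invoke a single zero-mean argument that covers both cases uniformly; this is a mild and arguably cleaner variant of the same endgame.
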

\begin{proof}
Let us first consider the case $g(x',x_N)=f(x') \sin(m x_N)$. By contradiction, let $u$ be a solution of \eqref{pb completo} satisfying \eqref{hp su u}. Applying Propositions \ref{d_1 >0}, \ref{c_1 >0}, \ref{sui coeff nulli} and \ref{su a_0 nullo}, we obtain the following particular form for $u$ in $\Sigma$:
\[
u(x',x_N)= a_1 \cos{x_N} + b_1 \sin{x_N}+ b_{m}(x') \sin{(m x_N)}.
\]
Since $u_N(x',0) \equiv 0$, we deduce that $b_m = - \frac{b_1}{m}$ is constant. On the other hand it is a solution of
\[
0=div'\left(\widehat{A}(x')\nabla' b_m\right)=(m^2-1)b_m+f(x');
\]
thus $f$ must be constant, that is $f(x')=f \equiv \theta \in \R\setminus \{0\}$. But in this case, imposing the initial condition $u(x',0)=0$ we would obtain $a_1=0$ and consequently
\[
u(x',x_N)= \frac{\theta }{m^2-1} \left( m \sin x_N -\sin(m x_N)\right),
\]
which does not satisfy \eqref{hp su u} because it assumes negative values (it is odd, $2\pi$-periodic and not identically zero).

When $g(x',x_N)=f(x') \cos(m x_N)$, we can argue as before to find  that $u$ has the form :
\[
u(x',x_N)= a_1 \cos{x_N} + b_1 \sin{x_N}+ a_{m}(x') \cos{(m x_N)}.
\]
The boundary condition $u_N(x',0) =0$ implies $ b_1 =0$, while we get $ a_m(x') = -a_1 = const. $ from $u(x',0)=0$.  Hence $0 = (m^2 -1) a_m + f(x') $ and so $f(x')=f \equiv \theta \in \R\setminus \{0\}$. Finally $u$ has the form 
\[
u(x',x_N)= \frac{\theta }{m^2-1} \left(\cos (x_N)-\cos(m x_N)\right).
\]
Observe that $ 0 \le u(x', \frac{2\pi}{m}) = \frac{\theta }{m^2-1}\left(\cos (\frac{2\pi}{m})-1\right)$ implies $ \theta \le 0$, while $ 0 \le u(x', \frac{\pi}{m}) = \frac{\theta }{m^2-1}\left(\cos (\frac{\pi}{m})+1\right)$ yields $\theta \ge 0$. A contradiction. 
\end{proof}

More in general, the same proof yields  
\begin{proposition}\label{teo 11}
Let $N=2$ or $3$, let $g \in \mathcal{C}(\overline{\R}^N_+)$ and assume 
\begin{multline*}
g(x',x_N)=\frac{c_0(x')}{2}+ \sum_{m\in I_1} c_{m}(x') \cos(m x_N)+ d_{\bar n}(x') \sin( \bar n x_N) \\
\text{or } \quad g(x',x_N)=c_{ \bar m}(x') \cos( \bar m x_N)+ \sum_{n \in I_2} d_{n}(x') \sin(n x_N),
\end{multline*}
where $I_1, I_2 \subset (\N\setminus \{0,1\})$ are finite sets, $\bar n \geq 2$, $\bar m \in (\N \setminus \{1\})$ and $d_{\bar n}$, $c_{\bar m}$ are not identically constant. Then there are no solutions of \eqref{pb completo} satisfying \eqref{hp su u}.
\end{proposition}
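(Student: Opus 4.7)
The plan is to follow exactly the template of Proposition \ref{teo 10}: in both formulations of $g$ the Fourier coefficients $c_1$ and $d_1$ vanish identically (since $I_1, I_2 \subset \N \setminus \{0,1\}$, $\bar n \geq 2$, and $\bar m \neq 1$), so Propositions \ref{d_1 >0} and \ref{c_1 >0} apply at once and give that $a_1$ and $b_1$ are constant and that $u(x',2\pi) \equiv 0$, $u_N(x',0) \equiv 0$, $u_N(x',2\pi) \equiv 0$ on $\R^{N-1}$. With this in hand, I would then reduce the Fourier expansion of $u$ in $\overline{\Sigma}$ to a finite sum that mirrors the structure of $g$: Proposition \ref{sui coeff nulli} eliminates $a_m$ for every $m \geq 2$ with $c_m \equiv 0$ and $b_m$ for every $m \geq 2$ with $d_m \equiv 0$, and in the second form of $g$ with $\bar m \geq 2$ the coefficient $c_0$ of $g$ is identically zero, so Proposition \ref{su a_0 nullo} additionally removes $a_0$. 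After this cleaning, the only $x'$-dependent Fourier coefficients left in the expansion of $u$ are the distinguished $b_{\bar n}$ (resp.\ $a_{\bar m}$) and the $a_m$'s for $m \in I_1$ (resp.\ the $b_n$'s for $n \in I_2$).

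The contradiction is then extracted by a single boundary evaluation at $x_N = 0$ that isolates the distinguished coefficient. In the first case, differentiating in $x_N$ and using $\sin(m \cdot 0) = 0$, the identity $u_N(x',0) \equiv 0$ collapses to $b_1 + \bar n\, b_{\bar n}(x') \equiv 0$, so $b_{\bar n}$ is constant; substituting into its governing equation
\[
div'\bigl(\widehat{A}(x') \nabla' b_{\bar n}(x')\bigr) = (\bar n^2 - 1)\, b_{\bar n}(x') + d_{\bar n}(x')
\]
forces $d_{\bar n}$ to be constant, contradicting the hypothesis. Dually, in the second case with $\bar m \geq 2$, the identity $u(x',0) \equiv 0$ reduces to $a_1 + a_{\bar m}(x') \equiv 0$ (the $\sin(n \cdot 0)$ terms all vanish), so $a_{\bar m}$ is constant and the corresponding equation for $a_{\bar m}$ forces $c_{\bar m}$ constant. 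The borderline case $\bar m = 0$ is handled in the same spirit: since the sign condition $c_0 \le 0$ is not guaranteed and Proposition \ref{su a_0 nullo} does not directly apply, one uses $u(x',0) \equiv 0$ to deduce that $a_0$ itself is constant, and then the equation $div'(\widehat{A}(x') \nabla' a_0) = -a_0(x') + c_0(x')$ forces $c_{\bar m} = c_0/2$ to be constant.

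I do not expect any genuine obstacle: the whole argument is essentially the bookkeeping that already underlies Proposition \ref{teo 10}, and the only subtlety is to verify, case by case, that exactly one $x'$-dependent Fourier coefficient of $u$ survives the boundary evaluation. This is precisely the coefficient whose counterpart in $g$ was assumed to be non-constant, which yields the sought contradiction.
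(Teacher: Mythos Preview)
Your proposal is correct and follows exactly the approach the paper intends: the paper's own proof of this proposition is simply the sentence ``More in general, the same proof yields'', referring back to Proposition~\ref{teo 10}, and you have spelled out precisely that template, including the minor adjustment needed for the case $\bar m = 0$ where Proposition~\ref{su a_0 nullo} is replaced by the direct use of $u(x',0)=0$.
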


\bigskip

In what follows we set $N=2$ or $3$ and we show that it is possible to use the method of the Fourier coefficients in order to obtain a complete classification when $c_1=d_1=0$ and only a finite number of the Fourier coefficients of $g$ are not identically zero. 

Let
\beq\label{linearity 2}
g(x',x_N)=\frac{c_0(x')}{2}+ \sum_{m\in I_1} c_{m}(x') \cos(m x_N)+ \sum_{n \in I_2 }d_{n}(x') \sin(n x_N),
\eeq
where $I_1=\{m_1, \ldots, m_{k_1}\}, I_2=\{n_1, \ldots, n_{k_2}\} \subset (\N\setminus \{0,1\})$. As far as $c_0$ is concerned, it can be identically $0$ or not. Only to fix our minds, we assume $c_0(x') \neq 0$; furthermore, for the sake of simplicity, we suppose that $c_0,c_{m_j}, d_{n_j} \in \mathcal{C}^{\infty}(\R^{N-1})$. \\
In what follows we will show that, \emph{if there exists $u \in \mathcal{C}^2(\overline{\R}_+^N)$ which solves \eqref{pb completo} for this particular $g$ and satisfies \eqref{hp su u}, then we can determine the explicit expression of $u$}.\\
Note that, since $c_1=d_1=0$, Propositions \ref{d_1 >0} and \ref{c_1 >0} imply that $a_1$ and $b_1$ are constant, and \eqref{passo base} holds true; thus, by Proposition \ref{sui coeff nulli} we obtain
\begin{multline*}
u(x',x_N)= \frac{a_0(x')}{2} + a_1\cos{x_N}+b_1 \sin{x_N}\\
+\sum_{j=1}^{k_1} a_{m_j}(x') \cos(m_j x_N) + \sum_{j=1}^{k_2} b_{n_j}(x') \sin(n_j x_N),
\end{multline*}
in $\Sigma$, where $a_0, a_{m_j}$ and $b_{n_j}$ are solutions of
\begin{align}
div'\left(\widehat{A}(x')\nabla' a_0(x')\right) &=-a_0(x') + c_0(x') \label{eq for a_0}\\
div'\left(\widehat{A}(x')\nabla' a_{m_j}(x')\right) &=(m_j^2-1)a_{m_j}(x') + c_{m_j}(x') \label{eq for a_{m_j}}\\
div'\left(\widehat{A}(x')\nabla' b_{n_j}(x')\right) &=(n_j^2-1)b_{n_j}(x') + d_{n_j}(x') \label{eq for b_{n_j}}.
\end{align}
Propositions \ref{teo 10} and \ref{teo 11} imply that, if there exists a unique $m \in \N \setminus \{1\}$ such that $c_m \neq 0$ and is not constant, or if there exists a unique $m \ge 2$ such that $d_m \neq 0$ and is not constant, then a solution of \eqref{pb completo} satisfying \eqref{hp su u} does not exist. If we are not in this situation and such a solution exists, this system of PDEs (or ODEs if $N=2$), together with the boundary conditions $u(x',0)=0$ and $u_N(x',0)=0$ permits to deduce the explicit expression of $a_0, a_{m_j}$ and $b_{n_j}$. We start observing that the boundary condition $u(x',0)=0$ involves only $a_0$ and $a_{m_j}$, while $u_N(x',0)=0$ involves the $b_{n_j}$. Thus, we can consider the system of $k_1+2$ equations given by $u(x',0)=0$ together with \eqref{eq for a_0} and \eqref{eq for a_{m_j}}; the unknowns are the functions $a_0$ and $a_{m_j}$, while we consider $a_1$ as a parameter; from $u(x',0)=0$ we get 
\beq\label{eq201*}
a_{0}(x')=-2a_1-2\sum_{j=1}^{k_1} a_{m_j}(x');
\eeq
As a consequence
\[
div'\left(\widehat{A}(x')\nabla' a_0(x')\right) = -2 \sum_{j=1}^{k_1} div'\left(\widehat{A}(x')\nabla' a_{m_j}(x')\right), 
\]
and
\[
-a_0(x') + c_0(x')=2a_1+ 2\sum_{j=1}^{k_1} a_{m_j}(x')+c_0(x'),
\]
so that equation \eqref{eq for a_0} gives
\[
\sum_{j=1}^{k_1} div'\left(\widehat{A}(x')\nabla' a_{m_j}(x')\right) = -a_1 -\sum_{j=1}^{k_1} a_{m_j}(x')-\frac{c_0(x')}{2}.
\]
We plug \eqref{eq for a_{m_j}} for $j \ge 1$ on the left hand side:
\[
 \sum_{j=1}^{k_1} \left[(m_j^2-1)a_{m_j}(x')+c_{m_j}(x')\right] = -a_1 -\sum_{j=1}^{k_1} a_{m_j}(x')-\frac{c_0(x')}{2},
 \]
i.e.
\beq\label{eq201}
a_{m_1}(x') = \frac{1}{m_1^2} \left[ -a_1 -f(x') -\sum_{j=2}^{k_1}m_j^2 a_{m_j}(x')\right],
\eeq
where $f(x')=c_0(x')/2+\sum_{j=1}^{k_1} c_{m_j}(x')$. Note that now equation \eqref{eq201} together with \eqref{eq for a_{m_j}} for $j \ge 2$ is a system of $k_1+1$ equations in the unknowns $a_{m_j}$ but without $a_0$. If we can solve it, we can recover $a_0$ using the \eqref{eq201*}.\\
We can iterate the same argument: from \eqref{eq201} we have 
\begin{multline*}
div'\left(\widehat{A}(x')\nabla' a_{m_1}(x')\right)\\ =
\frac{1}{m_1^2} \left[ - div'\left(\widehat{A}(x')\nabla' f(x')\right)-\sum_{j=2}^{k_1} m_j^2 div'\left(\widehat{A}(x')\nabla' a_{m_j}(x')\right)  \right]
\end{multline*}
(this is why we required the $c_{m_j}$ smooth) and
\begin{multline*}
(m_1^2-1)a_{m_1}(x')+ c_{m_1}(x') \\= \frac{m_1^2- 1}{m_1^2} \left[ -a_1 -f(x') -\sum_{j=2}^{k_1}m_j^2 a_{m_j}(x')\right] + c_{m_1}(x');
\end{multline*}
equation \eqref{eq for a_{m_j}} for $j=1$ gives
\begin{multline*}
- div'\left(\widehat{A}(x')\nabla' f(x')\right)-\sum_{j=2}^{k_1} m_j^2 div'\left(\widehat{A}(x')\nabla' a_{m_j}(x')\right) \\
= -(m_1^2- 1) a_1 -(m_1^2-1) f(x')- (m_1^2-1)\sum_{j=2}^{k_1}m_j^2 a_{m_j}(x') + m_1^2 c_{m_1}(x'),
\end{multline*}
i.e.
\begin{multline}\label{eq202}
a_{m_2}(x')=\frac{1}{m_2^2(m_2^2-m_1^2)} \\
\cdot \left[(m_1^2-1)a_1 -f_1(x') - \sum_{j=3}^{k_1} m_j^2(m_j^2-m_1^2) a_{m_j}(x')\right],
\end{multline}
where 
\[
f_1(x')= - div'\left(\widehat{A}(x')\nabla' f(x')\right)-\sum_{j=2}^{k_1} m_j^2 c_{m_j}(x')-c_{m_1}.
\]
Equation \eqref{eq202} together with \eqref{eq for a_{m_j}} for $j \ge 2$ is a system of $k_1$ equations in the unknowns $a_{m_j}$ for $j \ge 2$, but without $a_0$ and $a_{m_1}$.  If we can solve it, we can recover $a_{m_1}$ using the \eqref{eq201}, and then $a_0$ using the \eqref{eq201*}. \\
Iterating the procedure $k_1+2$ times (here we have to assume $k_1$ finite), we obtain $a_{m_{k_1}}$ as function of the Fourier coefficients of the $g$ (note that the more $k_1$ is large the more we have to require the $c_{m_j}$-s smooth), and successively the others $a_{m_j}$. Note that $a_0$ and $a_{m_j}$ are functions of $a_1$.\\
The same procedure works for the coefficients $b_{n_j}$-s, starting from $u_N(x',0)=0$. In the end we get the explicit expression of $u$ in function of the two ``parameters'' $a_1$ and $b_1$. At this point it is sufficient to impose that $u$ solves the considered differential equation 
to determine $a_1$ and $b_1$.

Let us see the iterative procedure in action with an example: let $N=2$ and
\begin{multline}
g(x,y)=\left( \frac{2}{(1+x^2)^2}-4 \frac{x}{(1+x^2)^2}\arctan x
+ (\arctan x)^2\right) \\+ \left(-\frac{2}{(1+x^2)^2} +4 \frac{x}{(1+x^2)^2}\arctan x + 3 (\arctan x)^2\right) \cos(2y)\\
=\frac{c_0(x)}{2} + c_2(x) \cos(2y).
\end{multline}

\begin{proposition}
There is a unique solution of
\beq\label{pb ex}
\begin{cases}
-\Delta u=u-g & \text{in $\R_+^N$}\\
u(x,0)=0 \\
\text{$u$ satisfies \eqref{hp su u}},
\end{cases}
\eeq
whose explicit expression is 
\[
u(x,y)=(\arctan x)^2\left(1-\cos(2y)\right). 
\]
\end{proposition}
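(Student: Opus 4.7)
The plan is to run the iterative Fourier procedure described just above the proposition, specialised to the data $c_1 \equiv 0 \equiv d_m$ for every $m\ge 1$, $c_0$, $c_2 \not\equiv 0$ and $c_m\equiv 0$ for $m\notin\{0,2\}$. Uniqueness is the key point; the explicit formula is then checked by a direct substitution at the end.

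\emph{Step 1: reduction of the Fourier expansion.} First I would invoke Propositions \ref{d_1 >0} and \ref{c_1 >0} (since $c_1\equiv 0\equiv d_1$) to conclude that $a_1$ and $b_1$ are constants and that $u(x,2\pi)\equiv 0$, $u_y(x,0)\equiv 0$ and $u_y(x,2\pi)\equiv 0$. Then, since $d_m\equiv 0$ for all $m\ge 1$ and $c_m\equiv 0$ for all $m\ge 3$, Proposition \ref{sui coeff nulli} forces $b_m\equiv 0$ for every $m\ge 2$ and $a_m\equiv 0$ for every $m\ge 3$. Hence in $\overline\Sigma$
\[
u(x,y)=\tfrac{1}{2}a_0(x)+a_1\cos y+b_1\sin y+a_2(x)\cos(2y).
\]
The boundary condition $u_y(x,0)=0$ gives $b_1=0$, and $u(x,0)=0$ gives the algebraic relation $a_0(x)=-2a_1-2a_2(x)$.

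\emph{Step 2: pinning down $a_2$ (and hence $a_0$) and the parameter $a_1$.} From \eqref{eq a_m completo} the remaining unknowns satisfy, with $N=2$ and $\widehat A\equiv 1$,
\[
a_0''(x)=-a_0(x)+c_0(x),\qquad a_2''(x)=3a_2(x)+c_2(x).
\]
Differentiating twice the relation $a_0=-2a_1-2a_2$ and substituting in the equation for $a_0$ yields
\[
-2a_2''(x)=2a_1+2a_2(x)+c_0(x),
\]
and combining this with $a_2''=3a_2+c_2$ I would get the purely algebraic identity
\[
a_2(x)=-\tfrac{1}{4}a_1-\tfrac{1}{8}\bigl(c_0(x)+2c_2(x)\bigr).
\]
The crucial algebraic simplification is
\[
\tfrac{c_0(x)}{2}+c_2(x)=4(\arctan x)^2,
\]
which follows because the two offending rational pieces in $\frac{c_0}{2}$ and $c_2$ are opposite in sign. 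Hence $a_2(x)=-\frac{a_1}{4}-(\arctan x)^2$. Plugging this candidate into $a_2''=3a_2+c_2$ and using $\frac{d^2}{dx^2}(\arctan x)^2=\frac{2}{(1+x^2)^2}-\frac{4x\arctan x}{(1+x^2)^2}$ kills every $x$-dependent term and leaves $\frac{3}{4}a_1=0$; therefore $a_1=0$, and then $a_2(x)=-(\arctan x)^2$ and $a_0(x)=2(\arctan x)^2$.

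\emph{Step 3: conclusion and extension to $\R_+^2$.} Plugging these values into the reduced expansion gives
\[
u(x,y)=(\arctan x)^2\bigl(1-\cos(2y)\bigr)\qquad\text{in }\overline\Sigma.
\]
To pass from the strip $\overline\Sigma$ to the whole halfplane I would repeat the shift argument used at the end of the proof of Theorem \ref{teo pb modello}: $v(x,y):=u(x,y+2\pi)$ still solves \eqref{pb completo} and satisfies \eqref{hp su u}, so by uniqueness in $\overline\Sigma$ it must equal $(\arctan x)^2(1-\cos 2y)$ on $\overline\Sigma$, and iteration covers all of $\R_+^2$. Finally I would check existence by a direct substitution: a one-line computation gives $u_{yy}=4(\arctan x)^2\cos(2y)$, $u_{xx}=\bigl(\frac{2}{(1+x^2)^2}-\frac{4x\arctan x}{(1+x^2)^2}\bigr)(1-\cos 2y)$, and matching the constant and $\cos(2y)$ components against $u-g$ uses exactly the same two identities as above. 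The nonnegativity and the boundedness in every strip are obvious, so \eqref{hp su u} holds.

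\emph{Main obstacle.} The only delicate point is recognising the cancellation $\frac{c_0}{2}+c_2=4(\arctan x)^2$; without it the iterative scheme would terminate at a transcendental-looking identity rather than at a closed formula. Once this cancellation is observed, the consistency condition $a_2''-3a_2=c_2$ plays the role of the final compatibility equation that selects the free parameter $a_1=0$, and everything else is bookkeeping in the Fourier variables.
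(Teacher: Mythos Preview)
Your proof is correct and follows essentially the same route as the paper's: reduce to the finite Fourier ansatz via Propositions \ref{d_1 >0}, \ref{c_1 >0} and \ref{sui coeff nulli}, combine the ODEs for $a_0$ and $a_2$ with the relation $a_0=-2a_1-2a_2$ to get $a_2=-\tfrac{a_1}{4}-(\arctan x)^2$, and then fix the parameter $a_1$. The only cosmetic difference is that the paper writes out $u_{a_1}$, notes that nonnegativity forces $a_1\le 0$, and then checks the PDE to conclude $a_1=0$, whereas you go straight to $a_1=0$ by substituting back into $a_2''=3a_2+c_2$; you also make the shift argument for the extension to $\R^2_+$ explicit, which the paper leaves implicit here.
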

\begin{proof}
Due to the form of $g$ we know that if $u$ solves \eqref{pb ex} then
\beq\label{exp of u}
u(x,y)= \frac{a_0(x)}{2} + a_1 \cos y + b_1 \sin y + a_2(x) \cos(2y),
\eeq	
with $u_y(x,0)=0$ (Lemma \ref{lemma on a_1 and b_1 div} and Proposition \ref{sui coeff nulli}). Thus $b_1=0$. As far as $a_0$ and $a_2$ is concerned, they solve
\begin{align}
& a_0''(x)=-a_0(x)+c_0(x) \label{eq for a_0 ex}\\
& a_2''(x)= 3a_2(x)+c_2(x) \label{eq for a_2 ex}.
\end{align}
From $u(x,0)=0$ we deduce
\beq\label{eq203}
a_0(x)=-2a_1-2a_2(x).
\eeq
Hence \eqref{eq for a_0 ex} gives
\[
a_2''(x) = -a_1 -a_2(x) -\frac{c_0(x)}{2};
\]
we plug \eqref{eq for a_2 ex} on the left hand side, obtaining 
\beq\label{a_2 from c}
a_2(x)= -\frac{c_0(x)}{8}-\frac{c_2(x)}{4}-\frac{a_1}{4} = -(\arctan x)^2-\frac{a_1}{4},
\eeq
and consequently from \eqref{eq203}
\beq\label{a_0 from c}
a_0(x)= \frac{c_0(x)}{4}+\frac{c_2(x)}{2}-\frac{3}{2}a_1=2\left(\arctan x\right)^2-\frac{3}{2}a_1.
\eeq
Note that it is sufficient to substitute the explicit expressions of $c_0$ and $c_2$ (which are given by $g$) in order to get $a_0$ and $a_2$, and no integration is required.\\
So far, we proved that a solution of \eqref{pb ex} is of type
\begin{align*}
u_{a_1}(x,y) &=(\arctan x)^2\left(1-\cos(2y)\right)  -\frac{a_1}{4}\left(3 - 4\cos y +\cos(2y)\right) \\ &= (\arctan x)^2\left(1-\cos(2y)\right) -\frac{a_1}{2}\left(1 - \cos y \right)^2, 
\end{align*}
which is non negative if and only if $a_1 \le 0$. It is straightforward to check that $u_{a_1}$ solves \eqref{pb ex} only if $a_1=0$.
\end{proof}

\begin{remark} For a generic $g$ of the form \eqref{linearity 2}, the iterative procedure we introduced above can be used as a test in order to check if \eqref{pb ex} has at least one solution satisfying \eqref{hp su u}.

For instance it is immediate to check that \eqref{pb ex} with
\[
g(x,y)= \cos(2x) + \sin(3x)\cos(2y) ,
\]
has not a solution satisfying \eqref{hp su u}. Indeed, if such a solution existed, then its explicit expression would be \eqref{exp of u} with $a_0$ and $a_2$ given by 
\[
a_0(x)=\frac{c_0(x)}{4}+\frac{c_2(x)}{2}-\frac{3}{2} a_1 \quad a_2(x)=-\frac{c_0(x)}{8}-\frac{c_2(x)}{4}-\frac{a_1}{4},
\]
(cf. \eqref{a_0 from c} and \eqref{a_2 from c}) where $c_0(x)=2\cos(2x)$ and $c_2(x)=\sin(3x)$;
but $\frac{a_0(x)}{2} + a_1 \cos(y) + a_2(x) \cos(2y)$ is not a solution of $-\Delta u=u-g$.

Last but not least, we also remark that if $ \lambda_1,..., \lambda_k$ are nonnegative real numbers and $u_1,...,u_k$ are solutions of \eqref{pb ex} with $g=g_j$, $\, j=1,...,k$, then the function $ u = \sum_{j=1}^{k} \lambda_j u_j$ is a solution of \eqref{pb ex} with $g= \sum_{j=1}^{k} \lambda_j g_j$.
Thus, combining in a suitable way the examples considered before, we can construct many other functions $g$ for which we have existence and uniqueness of the solution or existence and multiplicity of the solutions. 
\end{remark}

\end{document}